\journalname{}
\date{ \phantom{b} \vspace{45mm}\phantom{e}}
\definecolor{refblue}{rgb}{0,0,0.75} 
\definecolor{refblueb}{rgb}{0,0,1} 
\definecolor{refgreen}{rgb}{0,0.5,0} 
\definecolor{grey}{rgb}{0.7,0.7,0.7} 
\def\param{{\textcolor{black}{\theta}}}
\def\paramSpace{{\textcolor{black}{\mathcal{Q}}}}
\def\R{{\mathbb R}}
\def\C{{\mathbb C}}
\def\e{{\mathrm e}}
\def\iu{\mathrm{i}}
\def\eps{\varepsilon}
\def\calH{{\cal H}}
\def\calM{{\cal M}}
\def\wt{\widetilde}
\def\wh{\widehat}
\def\Re{{\mathrm{Re}\,}}
\def\Im{{\mathrm{Im}\,}}
\newdimen\GGGlength
\newdimen\GGGheight
\newbox\GGGbox
\def\GGGput[#1,#2](#3,#4)#5{%
  \setbox\GGGbox\vbox{\hbox{#5}\kern0pt}%
  \GGGlength\wd\GGGbox%
  \divide\GGGlength by100 \multiply\GGGlength by#1%
  \GGGheight\ht\GGGbox%
  \divide\GGGheight by100 \multiply\GGGheight by#2%
  \put(#3,#4){\kern-\GGGlength\raise-\GGGheight\box\GGGbox}}
\newcommand{\bch}{\color{black}}
\newcommand{\ech}{\color{black}}
\begin{document}

\title{Regularized dynamical parametric approximation}

\titlerunning{Regularized dynamical parametric approximation}

\author{ }
\authorrunning{ }

\author{Michael Feischl$^1$, Caroline Lasser$^2$, Christian Lubich$^3$, Jörg Nick$^4$}
\authorrunning{M.\ Feischl, C.\ Lasser, C.\ Lubich, J.\ Nick}

\institute{
$^1$~Institute for Analysis and Scientific Computing, Wiedner Hauptstra\ss e 8--10, 1040 Wien, TU Wien, Austria.
\email{michael.feischl@tuwien.ac.at}\\
$^2$~Department of Mathematics, Boltzmannstra\ss e 3, TU M\"unchen, D-85748 Garching bei M\"unchen, Germany.
\email{classer@ma.tum.de}\\
$^3$~Mathematisches Institut, Auf der Morgenstelle 10, Univ.\ T\"ubingen, D-72076 T\"ubingen, Germany.
\email{Lubich@na.uni-tuebingen.de}\\
$^4$~Seminar für Angewandte Mathematik, Rämistrasse 101, ETH Z\"urich, CH-8092 Z\"urich, Switzerland.
\email{joerg.nick@math.ethz.ch}\\
}

\date{ }

\maketitle

 \begin{abstract} 
This paper studies the numerical approximation of evolution equations by nonlinear parametrizations $u(t)=\Phi(\param(t))$ with time-dependent parameters $\param(t)$, which are to be determined in the computation. The motivation comes from approximations in quantum dynamics by multiple Gaussians and approximations of various dynamical problems by tensor networks and neural networks. In all these cases, the parametrization is typically irregular: the derivative $\Phi'(\param)$ can have arbitrarily small singular values and may have varying rank. We derive approximation results for a regularized approach in the time-continuous case as well as in time-discretized cases. For the latter, there is a nontrivial interplay between the regularization parameter and the time stepsize, depending also on the defect size and local bounds of the second derivative of the parametrization map $\Phi$. When this is appropriately taken into account, the approach can be successfully applied in irregular situations, even though it runs counter to the basic principle in numerical analysis to avoid solving ill-posed subproblems when aiming for a stable algorithm. 
The paper also includes two theoretical case studies: regularized parametric steepest descent for optimization and regularized parametric Strang splitting for the time-dependent Schr\"odinger equation.
Numerical experiments with sums of Gaussians for approximating quantum dynamics and with neural networks for approximating the flow map of a system of ordinary differential equations illustrate and complement the theoretical results.

\medskip\noindent
{\it Keywords.\,} Time-dependent nonlinear parametric approximation, regularization, high-dimensional differential equation, gradient flow, Schr\"odinger equation, time integration, deep neural network, multi-Gaussian approximation.

\end{abstract}

\tableofcontents

\section{Introduction}

\subsection{Nonlinear parametric approximation of evolution problems}
We consider the numerical approximation of a possibly high-dimensional initial-value problem of ordinary or partial differential equations
\begin{equation}\label{ivp}
\dot y = f(y), \qquad y(0)=y_0
\end{equation}
via a nonlinear parametrization
\begin{equation}\label{Phi}
y(t) \approx u(t)=\Phi(\param(t)), \qquad 0\le t \le \overline t,
\end{equation}
with time-dependent parameters $\param(t)$. Here, $\Phi$ is a smooth map from a parameter space into the solution space. We are interested in the situation of irregular parametrizations: the derivative matrix $\Phi'(\param)$ may have arbitrarily small singular values and possibly also varying rank. This is a typical situation of over-approximation that frequently arises in applications and causes severe numerical difficulties.
Our motivation for studying such problems originated from the following areas, for which 
references are given and discussed in Subsection~\ref{subsec:lit}.
\begin{itemize}
\item 
Multi-Gaussian approximations in quantum dynamics: Here the parameters are the evolving complex width matrices, positions, momenta, and phases in a sum of complex Gaussians.
\item 
Tensor network approximations in quantum dynamics: Here the parameters are the evolving connection tensors and bases.
\item
Deep neural network approximations of dynamical problems: 
Here the parameters are the evolving weight matrices and biases of the various layers of the neural network.
\end{itemize}
In all these cases, the derivative matrix $\Phi'(\param)$ typically has numerous very small singular values.

\subsection{Regularized dynamical parametric approximation}
In the numerical approach considered in this paper, we determine the time derivatives $\dot \param(t)$ and 
$\dot u(t)=\Phi'(\param(t))\dot \param(t)$  by solving the regularized linear least squares problem (we omit the argument $t$)
\begin{equation}
\label{reg-lsq-intro}
\| \dot u - f(u) \|^2 +\eps^2 \| \dot \param \|^2 = \min !
\end{equation}
with a possibly time-dependent regularization parameter $\eps(t)>0$. This yields a differential equation for the parameters $\param$, and then $u=\Phi(\param)$. We refer to this approximation as a  regularized dynamical parametric approximation. 
The resulting differential equation for $\param$ is solved numerically by a standard time-stepping method, where each function evaluation solves a linear least squares problem \eqref{reg-lsq-intro}. 

This algorithmic approach is studied here even though the differential equation for the parameters $\param$ is severely ill-conditioned for small $\eps$. Errors in the initial value $\param(0)$ can increase by a factor $e^{ct/\eps}$ (with a constant $c>0$ independent of $\eps$) to errors in $\param(t)$ at times $t>0$.
The approach thus appears to run counter to a basic principle of numerical analysis:  to avoid solving ill-posed subproblems when aiming for a stable algorithm.

As a consequence of the ill-posedness in the parameters, also the error propagation in the solution approximation $u(t)=\Phi(\param(t))$ is ill-behaved, but nevertheless the problem turns out to be what might be called {\it well-posed up to the order of the defect size} $O(\delta)$, where $\delta=\max_{0\le t \le \bar t}\delta(t)$ and $\delta(t)^2$ is the minimum value attained in \eqref{reg-lsq-intro} at time $t$. This beneficial behaviour is found both in the differential equation that results from \eqref{reg-lsq-intro} and in its numerical time discretization, and this makes the regularization \eqref{reg-lsq-intro} a viable computational approach. Its analysis adds new aspects to that of the underlying differential equation \eqref{ivp} and the time discretization.

One possibly obvious finding from our analysis is that good pointwise approximability of the solution $y(t)$ by parametrized functions is not sufficient. It is important that the time derivative $\dot y(t)$ can be well approximated in the tangent spaces at parametrized functions $u=\Phi(\param)$ near $y(t)$. This suffices when $f$ is Lipschitz-continuous. In the case of a dominant term $Ly$ in $f(y)=Ly+g(y)$ with an unbounded operator $L$ that maps parametrized functions into their tangent space (a situation encountered for the Schr\"odinger equation), we need that $\dot y(t) - Ly(t)$ can be well approximated in the tangent space, and the
regularized least squares problem \eqref{reg-lsq-intro} should be slightly modified.

\begin{remark} [Truncation vs.~regularization]
As an alternative to regularizing the least squares problem as in \eqref{reg-lsq-intro}, small singular values of the matrix $A=\Phi'(\param)$ below a threshold $\eps>0$ are set to zero, yielding a truncated matrix $A_\eps$ with smallest nonzero singular value at least~$\eps$. Instead of \eqref{reg-lsq-intro}, the time derivative $\dot \param$ is then determined to be of minimal norm such that
\begin{equation}\label{trunc-lsq}
\| A_\eps(\param)\dot \param - f(\Phi(\param)) \|^2  = \min !
\end{equation}
Because of discontinuities in $A_\eps(\cdot)$ when singular values cross the threshold, the resulting differential equation $\dot \param = A_\eps(\param)^+f(\Phi(\param))$ with the Moore--Penrose pseudo-inverse $A_\eps^+$ has a discontinuous right-hand side and is problematic to analyse.
After time discretization, however, this approach can be analysed by arguments that are analogous to the regularized approach \eqref{reg-lsq-intro} and it is found to behave in a very similar way, since $A_\eps^+$ behaves similarly to the matrix $(A^\top A+ \eps^2 I)^{-1} A^\top$ that appears in the normal equations for~\eqref{reg-lsq-intro}. Both matrices have the same singular vectors and the singular values are $\sigma_i^{-1}$ if $\sigma_i\ge \eps$ and zero else, and $(\sigma_i^2+\eps^2)^{-1}\sigma_i$, respectively, where $\sigma_i$ are the singular values of $A$. \textcolor{black}{Therefore, we observe very similar numerical behaviour of the two approaches, but the computational cost of truncation is usually significantly higher compared to regularization.}
\end{remark}

\subsection{Related work} \label{subsec:lit}

In quantum dynamics, the non-regularized approach \eqref{reg-lsq-intro} with $\eps=0$ is known as the Dirac--Frenkel time-dependent variational principle and has been widely used in computational chemistry and quantum physics over the past decades; see the books \cite{kramer1981,Lub08} for dynamic, geometric and approximation aspects and among countless papers see e.g. \cite{Joubert24,worth2004novel,richings2015quantum} with Gaussian wavepackets, \cite{meyer1990multi,wang2003multilayer} and \cite{shi2006classical,haegeman2016unifying} with tensor networks and 
\cite{carleo2017solving,hartmann2019neural,schmitt2020quantum} with artificial neural networks. This approach usually leads to ill-conditioned least squares problems which cause numerical difficulties\footnote{The case of tree tensor networks, which includes low-rank matrices, Tucker tensors and tensor trains as special cases, is exceptional since its multilinear geometry allows for time integration algorithms that are robust to arbitrarily small singular values \cite{ceruti2021time,lubich2015time}.}. 
{\it Ad hoc} regularization is often used in computations in such cases, but a systematic foundation and analysis appear to be missing in the literature. The regularization chosen in \cite{richings2015quantum} corresponds precisely to \eqref{reg-lsq-intro} but is not further investigated there.

In recent years, an extensive amount of research has been invested in nonlinear approximations of differential equations, in particular in the context of neural networks. The majority of the literature relies on minimizing the residual in space-time, which yields a nonlinear optimization problem for the parameters globally in time. Prominent examples of such a strategy include the ``deep Ritz method" \cite{BW18} as well as ``weak generative adversarial networks"~\cite{ZBYZ20} and both were successfully applied to a wide range of partial differential equations (see e.g. \cite{ZZKP19,BUMM21,BBGJJ21,KZBKKAA23}). The nonlinear optimization process is, however, difficult to analyse and the existing theory consequently focuses on the expressivity properties of neural networks, i.e.,
what approximation is theoretically possible (see e.g. \cite{OPS20,MM23,KPRS2022}). This of course excludes the much harder task of training the network and explaining the success of these methods remains evasive.

An alternative approach in the literature is to use Galerkin based methods to obtain differential equations for the parameters. 
The Dirac--Frenkel time-dependent variational principle has been used outside the realm of quantum dynamics in the context of deep neural networks, for example in \cite{DZ21} (referred to as ``evolutional deep neural network") and in \cite{BPV24} (referred to as ``neural Galerkin schemes"). 

The work~\cite{KO2001} analyzes and proposes several regularization schemes in the context of Krylov-subspace projections of the original problem. This is of interest if one additionally wants to analyze the solution process of the arising linear systems, which we do not study here.

Since the original submission of this paper in March 2024, several further related articles have appeared. In particular, we mention \cite{BCM25}, which discusses dynamic sampling strategies for parametric approaches and investigates their application to gradient flows. Further we mention \cite{DSP25}, which mitigates the effect of the ill-conditioned least squares problems by a randomization strategy.

\subsection{Outline}

\bigskip
After establishing notation and framework in Section~\ref{sec:regdyn}, we derive {\it a posteriori} and {\it a priori} error bounds for the time-continuous regularized approach \eqref{reg-lsq-intro} in Section~\ref{sec:err}, where we also study the sensitivity of approximations to initial values. 

In Section~\ref{sec:time-disc} we study the error behaviour of the time discretization by the explicit and implicit Euler methods, where we observe in particular the interplay between the time stepsize and the regularization parameter. In Section~\ref{sec:rk} the result is extended  to Runge-Kutta discretizations, for which we prove an optimal-order error bound up to the defect size $\delta$. In Section~\ref{sec:eps-h}
we propose an adaptive selection of the regularization parameter and the stepsize that is based on the previous error analysis. 

In Section~\ref{sec:con} we study the effect of enforcing conserved quantities, which are typically not preserved by the regularized parametric approach.

In Section~\ref{sec:grad-sys} we study an approach to minimize a function via regularized parametric steepest descent,
which corresponds to using the regularized parametric Euler method for the gradient system. The resulting parametric algorithm retains the decay properties of non-parametric steepest descent up to the defect size. It converges to the global minimum of strongly convex functions up to an error of the defect size, unlike gradient descent for the parameters.

In Section~\ref{sec:psi} we study the  regularization approach for the time-dependent Schr\"odinger equation as an important exemplary case of an evolutionary partial differential equation. We give an error analysis of the regularized parametric version of the Strang splitting method.

Finally, in Section~\ref{sec:num} we present two numerical examples that illustrate the behaviour of the regularized parametric approximation: first by a neural network to approximate the flow map of a system of ordinary differential equations, and second by a linear combination of complex Gaussians to approximate quantum dynamics.

In an appendix we collect some useful results on regularized least squaresproblems.

\section{Regularized dynamical parametric approximation} 
\label{sec:regdyn}
We start with the formulation of the framework.
Let the state space $\calH$ be a Hilbert space (finite- or infinite-dimensional) with the inner product $\langle\cdot,\cdot\rangle_\calH$ and corresponding norm $\|\cdot\|_\calH$ and let the parameter space $\paramSpace$ be a finite-dimensional vector space equipped with the inner product $\langle\cdot,\cdot\rangle_\paramSpace$ and corresponding norm $\|\cdot\|_\paramSpace$.
The parametrization map $\Phi:\paramSpace\to\calH$ is assumed to be twice continuously differentiable. It need not be one-to-one, not even locally. The derivative $\Phi'(\param)$ may have arbitrarily small singular values and a nontrivial nullspace of varying dimension. 

The initial value in \eqref{ivp} is assumed to be in parametrized form: $y(0)=\Phi(\param(0))$ for some $\param(0)\in\paramSpace$. We take $u(0)=y(0)=\Phi(\param(0))$.

For $t\ge 0$, let $\eps(t)>0$ be the regularization parameter.
For $u(t)\in\calH$ and $\param(t)\in\paramSpace$ with $u(t)=\Phi(\param(t))$,  the time derivatives $\dot u(t)=\Phi'(\param(t))\dot \param(t)\in\calH$ 
and $\dot \param(t)\in \paramSpace$ are determined by requiring that
\begin{equation}
\label{reg-lsq}
\delta(t)^2 := \| \dot u(t) - f(u(t)) \|_\calH^2 +\eps(t)^2 \| \dot \param(t) \|_\paramSpace^2 \quad\text{is minimal.}
\end{equation}
In this regularized linear least squares problem, $\dot \param(t)$ is uniquely determined, which it would not necessarily be without the regularization. Problem \textcolor{black}{\eqref{reg-lsq} can equivalently be written in the damped form 
\begin{equation}\label{reg-damp}
\left\| \begin{pmatrix}
     \Phi'(\param(t)) \\ \eps(t) \,\mathrm{Id}_\paramSpace\\
\end{pmatrix}\dot \param(t) - \begin{pmatrix} f(u(t))\\ 0\end{pmatrix} \right\|_{\calH\times\paramSpace}  \quad\text{is minimal}
\end{equation}
and solved by well-established methods, see e.g. \cite{Bjoerck24}. The normal equations\footnote{If $\calH$ is infinite-dimensional, the linear map $A:\R^k\to\calH$ can be viewed as a quasi-matrix $A=(a_1,\dots,a_k)$ with $a_i\in \calH$, see e.g. \cite{townsend2015continuous}, for which $A^\top$ is to be interpreted as the adjoint of $A$. For $v,w\in \calH$ we interpret $v^\top w$ as the inner product $(v,w)_\calH$.
In the following we will use the familiar matrix notation throughout.} are}
$$
M_\eps(\param) \dot \param = A(\param)^\top f(\Phi(\param))
$$
with the symmetric positive matrix $M_\eps=A^\top A + \eps^2 Q$, where $A=\Phi'$ and the symmetric positive definite matrix $Q$ defines the norm $\|\param\|_\paramSpace=(\param^\top Q\param)^{1/2}$.
With the quasi-projection $P_\eps(\param) = A(\param) M_\eps(\param)^{-1} A(\param)^\top$,
we then have
$$
\dot u = P_\eps(\param) f(u).
$$
\ech
\begin{remark}
\bch
We  give an interpretation of \eqref{reg-lsq} as a projection. 
\ech
The graph
$$
\calM=\{ (u,\param)\,:\, u =\Phi(\param) \} \subset \calH\times\paramSpace
$$
need not be a manifold, but at $(u,\param)\in\calM$ it has the tangent space 
$$
T_{(u,\param)}\calM = \{ (\dot u,\dot \param) \,:\,  \dot u = \Phi'(\param)\dot \param \} \subset \calH \times \paramSpace.
$$
With respect to the $\eps$-weighted inner product on $\calH \times \paramSpace$ that is defined by 
$\langle\cdot,\cdot\rangle_\calH+ \eps^2 \langle\cdot,\cdot\rangle_\paramSpace$, we consider the orthogonal projection
onto $T_{(u,\param)}\calM$, denoted
$$
\Pi_{(u,\param)}^\eps: \calH \times \paramSpace \to T_{(u,\param)}\calM.
$$
Then, \eqref{reg-lsq} is equivalent to (omitting the argument $t$)
\begin{equation}\label{P-eq}
(\dot u, \dot \param)=\Pi_{(u,\param)}^\eps\bigl( f(u),0 \bigr).
\end{equation}
While this reformulation is reminiscent of the useful interpretation of the Dirac--Frenkel time-dependent variational principle as a projection, see \cite[Section II.1]{Lub08}, the difficulty here is that $\Pi_{(u,\param)}^\eps$ has no moderate Lipschitz constant with respect to $(u,\param)$ under our assumptions, as is quantified in Proposition~\ref{prop:sensitivity} below. The Lipschitz-conti\-nuity of the projection onto the tangent space was an essential condition in the proof of quasi-optimality of the approximation obtained from the Dirac--Frenkel time-dependent variational principle in \cite[Section II.6]{Lub08}. This is not available here, and so the question arises as to what alternative kind of error analysis can still be done. 
\end{remark}

\section{Error bounds in the time-continuous setting}
\label{sec:err}

In this section we assume that the vector field $f:\calH\to\calH$ in the differential equation \eqref{ivp} is  a continuous map with the one-sided Lipschitz constant $\ell$: for all $v,w\in \calH$,
\begin{equation}
\label{Lip}
\Re\langle v-w, f(v)-f(w) \rangle_\calH \le \ell\, \|v-w\|_\calH^2, \\
\end{equation}
The real part is taken in the case of a complex-linear space $\calH$.
In some of the following results we require $f$ to be Lipschitz-continuous with the Lipschitz constant $L$\,: for all $v,w\in \calH$,
\begin{equation}
\label{Lip-L}
 \|f(v)-f(w)\|_\calH \le L \,\|v-w\|_\calH.
\end{equation}

\subsection{A posteriori error bound}
For the defect $d(t)$ of the approximation $u=\Phi(\param)$ we have 
\begin{equation}\label{defect}
\dot u(t) = f(u(t)) + d(t) 
\qquad\text{with}\quad \|d(t)\|_\calH \le \delta(t)
\end{equation}
for $\delta(t)$ of \eqref{reg-lsq}. There is the following error bound in terms of $\delta$.

\begin{proposition} \label{prop:delta}
In the situation of Section~\ref{sec:regdyn} and with the one-sided Lip\-schitz condition \eqref{Lip}, the error is bounded by
$$
\| u(t)-y(t) \|_\calH \le 
\textcolor{black}{e^{\ell t} \, \|u(0) - y(0)\|_{\mathcal{H}}} +
\int_0^t  e^{\ell(t-s)}\, \delta(s)\, ds.
$$
\end{proposition}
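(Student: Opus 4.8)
The plan is to estimate the time derivative of $\|u(t)-y(t)\|_\calH$ using the defect equation \eqref{defect} together with the one-sided Lipschitz condition. Write $e(t) = u(t)-y(t)$. Subtracting $\dot y = f(y)$ from $\dot u = f(u) + d$ gives $\dot e = f(u) - f(y) + d$. Then, at points where $\|e(t)\|_\calH > 0$, one has $\tfrac{d}{dt}\|e(t)\|_\calH = \Re\langle e(t)/\|e(t)\|_\calH,\, \dot e(t)\rangle_\calH$, so
$$
\|e(t)\|_\calH \tfrac{d}{dt}\|e(t)\|_\calH = \Re\langle e, f(u)-f(y)\rangle_\calH + \Re\langle e, d\rangle_\calH \le \ell\,\|e\|_\calH^2 + \|e\|_\calH\,\|d\|_\calH,
$$
using \eqref{Lip} for the first term and Cauchy--Schwarz for the second. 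Dividing by $\|e(t)\|_\calH$ yields the differential inequality $\tfrac{d}{dt}\|e(t)\|_\calH \le \ell\,\|e(t)\|_\calH + \delta(t)$, since $\|d(t)\|_\calH \le \delta(t)$.

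Next I would integrate this differential inequality. Multiplying by the integrating factor $e^{-\ell t}$ gives $\tfrac{d}{dt}\bigl(e^{-\ell t}\|e(t)\|_\calH\bigr) \le e^{-\ell t}\delta(t)$; integrating from $0$ to $t$ and using $e(0)=0$ (since $u(0)=y(0)$ by construction) produces exactly
$$
\|u(t)-y(t)\|_\calH \le \int_0^t e^{\ell(t-s)}\,\delta(s)\,ds,
$$
which is the claimed bound. This is a standard Grönwall-type argument for one-sided Lipschitz vector fields.

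The only genuine subtlety is that $t\mapsto\|e(t)\|_\calH$ need not be differentiable where $e(t)=0$, so the formal computation above must be justified. The clean way is to work with $\phi(t) = \|e(t)\|_\calH^2$, which is $C^1$ since $e$ is $C^1$: one gets $\dot\phi = 2\Re\langle e,\dot e\rangle \le 2\ell\phi + 2\|e\|_\calH\delta(t) \le 2\ell\phi + 2\sqrt{\phi}\,\delta(t)$, and then invokes a standard comparison lemma (or regularizes $\sqrt{\phi+\mu^2}$ and lets $\mu\to 0$) to pass from this quadratic differential inequality to the linear one for $\sqrt{\phi} = \|e\|_\calH$. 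This technical point — handling non-smoothness of the norm at zeros of $e$ — is the main obstacle, but it is routine; everything else is the elementary integrating-factor estimate.
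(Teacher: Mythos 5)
Your proof is correct and follows essentially the same route as the paper's: subtract the defect equation from the exact equation, take the real inner product with $u-y$, apply the one-sided Lipschitz bound and Cauchy--Schwarz, divide by the norm, and integrate via Gronwall/integrating factor. Your extra remark on justifying the division at zeros of $e(t)$ (working with $\|e\|^2$ or a regularized square root) addresses a technicality the paper's "standard" proof passes over silently; it is not needed beyond that.
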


\begin{proof} The proof is standard and is included for the convenience of the reader.
We subtract \eqref{ivp} from \eqref{defect} and take the inner product with $u-y$ and its real part (if $\calH$ is a complex space). On the left-hand side this yields, omitting the omnipresent argument $t$,
$$
\Re\langle u-y, \dot u - \dot y \rangle = \frac12 \frac d{dt}\| u-y \|^2 = \|u-y\|\cdot \frac d{dt}\| u-y \|
$$
and on the right-hand side 
$$
\Re\langle u-y,f(u)-f(y)+d \rangle \le \|u-y\| \bigl( \ell \, \|u-y\| + \delta \bigr).
$$
Dividing both sides by $\|u-y\|$ and using Gronwall's inequality yields the result.
\qed
\end{proof}

While $\delta(t)$ can be monitored during the computation and is thus available {\it a posteriori}, we have not bounded it {\it a priori} from known or assumed approximability properties of the exact solution. This is done next.

\subsection{\textcolor{black}{Quasi-optimality of the approximation}}
We will bound the defect size $\delta(t)$ by a quantity that measures the approximability of the solution derivative $\dot y(t)$ in the tangent spaces $T_{(u,\param)}\calM$ for all $u=\Phi(\param)$ in a neighbourhood of $y(t)$. We fix a radius $\rho>0$ such that for every $t\le \bar t$, there exist parametrized $u=\Phi(\param)$ with $\|u-y(t)\|_\calH \le \rho$, and define
\begin{equation}
    \label{delta-rho}
    \bar\delta_\rho(t)^2 := \sup_{\param\in\paramSpace: \| \Phi(\param)-y(t) \|_\calH \le \rho}\ 
    \min_{\dot \param \in \paramSpace} \Bigl( \|\Phi'(\param)\dot \param - \dot y(t) \|^2 + \eps^2 \| \dot \param \|^2 \Bigr).
\end{equation}
This quantity measures the relevant approximation capability (or expressivity in another terminology) of the parametrization. It depends on the exact solution $y(t)$ of \eqref{ivp} but is independent of the regularized parametric approximation $u(t)=\Phi(\param(t))$ obtained from \eqref{reg-lsq}.
We can bound $\delta(t)$ of \eqref{reg-lsq}, which does depend on the desired approximation $u(t)=\Phi(\param(t))$, in terms of $\bar\delta_\rho(t)$.

\begin{lemma}
\label{lem:delta-bound}
Provided that $\|u(t)-y(t)\| \le \rho$, we have
$$
\ \delta(t) \le \bar\delta_\rho(t) + \| f(u(t))-f(y(t)) \|_\calH.
$$
\end{lemma}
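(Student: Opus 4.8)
The plan is to compare the actual minimizer $\dot q(t)$ of the regularized least-squares problem \eqref{reg-lsq} at $u(t)=\Phi(q(t))$ against a competitor built from the definition of $\bar\delta_\rho(t)$. Since $u(t)=\Phi(q(t))$ with $\|u(t)-y(t)\|\le\rho$ by hypothesis, the parameter $q=q(t)$ is admissible in the supremum defining $\bar\delta_\rho(t)^2$ in \eqref{delta-rho}. Hence there exists a (near-)minimizer $\dot q_\star\in\calQ$ of the inner minimization such that
$$
\|\Phi'(q(t))\dot q_\star - \dot y(t)\|_\calH^2 + \eps^2\|\dot q_\star\|_\calQ^2 \le \bar\delta_\rho(t)^2
$$
(or with an arbitrarily small slack, which can be sent to zero at the end). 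I would plug $\dot q_\star$ into the objective of \eqref{reg-lsq}, which $\dot q(t)$ minimizes, to get
$$
\delta(t)^2 \le \|\Phi'(q(t))\dot q_\star - f(u(t))\|_\calH^2 + \eps^2\|\dot q_\star\|_\calQ^2.
$$

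Next I would insert $\pm\dot y(t)$ inside the first norm and apply the triangle inequality in $\calH$:
$$
\|\Phi'(q(t))\dot q_\star - f(u(t))\|_\calH \le \|\Phi'(q(t))\dot q_\star - \dot y(t)\|_\calH + \|\dot y(t)-f(u(t))\|_\calH.
$$
By the differential equation \eqref{ivp}, $\dot y(t)=f(y(t))$, so the last term equals $\|f(y(t))-f(u(t))\|_\calH$. To recombine this with the $\eps^2\|\dot q_\star\|_\calQ^2$ term, I would use the elementary inequality $\sqrt{(a+b)^2+c^2}\le \sqrt{a^2+c^2}+b$ for nonnegative $a,b,c$ (which follows from $\sqrt{a^2+c^2}+b\ge \sqrt{(\sqrt{a^2+c^2}+b)^2}\ge\sqrt{a^2+2ab+b^2+c^2}=\sqrt{(a+b)^2+c^2}$ since the cross term $2ab$ dominates after squaring — more directly, it is the triangle inequality for the Euclidean norm of the pair $(\sqrt{a^2+c^2},0)$ versus $(a+b,c)$ reorganized as $(a,c)+(b,0)$). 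Applying this with $a=\|\Phi'(q(t))\dot q_\star-\dot y(t)\|_\calH$, $b=\|f(y(t))-f(u(t))\|_\calH$, $c=\eps\|\dot q_\star\|_\calQ$ yields
$$
\delta(t) \le \sqrt{a^2+c^2} + b \le \bar\delta_\rho(t) + \|f(u(t))-f(y(t))\|_\calH,
$$
using that $a^2+c^2\le\bar\delta_\rho(t)^2$.

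This argument is essentially a two-line computation, and I do not anticipate a serious obstacle; the only mildly delicate points are (i) making sure the inner minimum in \eqref{delta-rho} is actually attained — it is, since $\calQ$ is finite-dimensional and the objective is a coercive quadratic in $\dot q$ thanks to the $\eps^2\|\dot q\|_\calQ^2$ term, so I can take $\dot q_\star$ to be the exact minimizer rather than a near-minimizer — and (ii) keeping track of the real-part convention, which plays no role here since only norms appear. If one preferred not to invoke attainment, one works with a slack $\eta>0$ and lets $\eta\to0$ at the end.
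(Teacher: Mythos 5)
Your proposal is correct and follows essentially the same route as the paper's proof: take the minimizer $\dot q_+$ of the tangent-space approximation to $\dot y(t)$ as a competitor in \eqref{reg-lsq}, use minimality of $\dot q(t)$, insert $\pm\dot y(t)$ with the triangle inequality, and recombine via $(a+b)^2+c^2\le(\sqrt{a^2+c^2}+b)^2$, which is exactly the expansion the paper carries out term by term. Your remarks on attainment of the inner minimum (coercive quadratic on finite-dimensional $\calQ$) are a small but legitimate tidying of a point the paper passes over silently.
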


\begin{proof} 
We omit the argument $t$ in the following. We have $u=\Phi(\param)$ and $\dot u=\Phi'(\param)\dot \param$.
Let $\dot \param_+\in \paramSpace$ be such that 
$$
\|\Phi'(\param)\dot \param_+ - \dot y \|_\calH^2 + \eps^2 \| \dot \param_+ \|_\paramSpace^2 \quad\text{is minimal.}
$$
We obtain from \eqref{reg-lsq}, \eqref{delta-rho}  and $\dot y=f(y)$ 
\begin{align*}
\delta^2 &=  \| \Phi'(\param)\dot \param - f(u) \|_\calH^2 + \eps^2 \| \dot \param \|_\paramSpace^2
\le \| \Phi'(\param)\dot \param_+ - f(u) \|_\calH^2 + \eps^2 \| \dot \param_+ \|_\paramSpace^2
\\
&\le \Bigl( \| \Phi'(\param)\dot \param_+ - \dot y \|_\calH + \| f(y)-f(u) \|_\calH \Bigr)^2 + \eps^2 \| \dot \param_+ \|_\paramSpace^2
\\
&\le {\bar\delta_\rho}^{\,2} + 2 \bar\delta_\rho \, \| f(y)-f(u) \|_\calH + \| f(y)-f(u) \|_\calH ^2
\\
&= \Bigl( \bar\delta_\rho + \| f(y)-f(u) \|_\calH \Bigr)^2.
\end{align*}
Taking square roots yields the result.
\qed
\end{proof}

We construct a reference approximation $u_*(t)=\Phi(\param_*(t))$ with $\param_*(t)\in \paramSpace$ from the exact solution such that its derivative is a regularized best approximation in the tangent space at $(u_*(t),\param_*(t))$ to the solution derivative $\dot y(t)$. The derivatives $\dot u_*(t)= \Phi'(\param_*(t))\dot \param_*(t)$ and $\dot \param_*(t)$ are determined such that
\begin{equation}\label{reg-lsq-star}
\delta_*(t)^2 := \| \dot u_*(t)-\dot y(t) \|_\calH^2 + \eps^2 \| \dot \param_*(t) \|_\paramSpace^2 \quad\text{is minimal}.
\end{equation}
Here we have the immediate error bound
\begin{equation}\label{err-u-star}
\| u_*(t)-y(t) \|_\calH 
\le \int_0^t  \| \dot u_*(s)-\dot y(s) \|_\calH \, ds \le \int_0^t  \delta_*(s)\, ds  \le \int_0^t  \bar\delta_\rho(s)\, ds
\end{equation}
as long as this bound does not exceed $\rho$.
The following result bounds the error of the numerical approximation $u(t)$ by a multiple of the bound in \eqref{err-u-star}.
\begin{proposition}
\label{prop:delta-star}
In the situation of Section~\ref{sec:regdyn} and under the Lipschitz condition \eqref{Lip}, the error is bounded by
$$
\| u(t)-y(t) \|_\calH \le
\textcolor{black}{e^{(\ell+L)t} \, \|u(0) - y(0)\|_{\mathcal{H}}} +
e^{(\ell+L) t}\int_0^t   \bar\delta_\rho(s)\, ds
$$
as long as this does not exceed $\rho$.
\end{proposition}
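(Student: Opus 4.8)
The plan is to turn the a posteriori estimate of Proposition~\ref{prop:delta} and the defect bound of Lemma~\ref{lem:delta-bound} into a single linear differential inequality for $\|u(t)-y(t)\|_\calH$, apply Gronwall, and then make the proviso ``$\|u(t)-y(t)\|_\calH\le\rho$'' rigorous by a bootstrap argument.

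First I would reuse the pointwise inequality from the proof of Proposition~\ref{prop:delta}, obtained there just before Gronwall is invoked, namely
\begin{equation*}
\frac{d}{dt}\,\|u(t)-y(t)\|_\calH \;\le\; \ell\,\|u(t)-y(t)\|_\calH + \delta(t),
\end{equation*}
which comes from subtracting \eqref{ivp} from \eqref{defect}, pairing with $u-y$, taking the real part, and using the one-sided Lipschitz condition in \eqref{Lip}. Wherever $\|u(t)-y(t)\|_\calH\le\rho$, Lemma~\ref{lem:delta-bound} applies, and together with the Lipschitz estimate $\|f(u)-f(y)\|_\calH\le L\|u-y\|_\calH$ from \eqref{Lip} it gives $\delta(t)\le\bar\delta_\rho(t)+L\,\|u(t)-y(t)\|_\calH$. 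Substituting this, and using $u(0)=y(0)$, Gronwall's inequality yields
\begin{equation*}
\|u(t)-y(t)\|_\calH \;\le\; \int_0^t e^{(\ell+L)(t-s)}\,\bar\delta_\rho(s)\,ds \;\le\; e^{(\ell+L)t}\int_0^t \bar\delta_\rho(s)\,ds ,
\end{equation*}
which is the stated bound.

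It remains to legitimize using Lemma~\ref{lem:delta-bound} throughout, i.e. to show that $\|u(t)-y(t)\|_\calH\le\rho$ actually holds as long as the asserted bound does. For this I would run a standard continuity argument: writing $\beta(t):=e^{(\ell+L)t}\int_0^t\bar\delta_\rho(s)\,ds$, let $T$ be the supremum of times $\tau$ such that $\|u(s)-y(s)\|_\calH\le\rho$ on $[0,\tau]$; then $T>0$ since $u(0)=y(0)$ and $\rho>0$, on $[0,T)$ the chain of estimates above is licensed and gives $\|u(t)-y(t)\|_\calH\le\beta(t)$, and if $\beta(t)<\rho$ on the interval of interest then continuity of $t\mapsto\|u(t)-y(t)\|_\calH$ forces the value at $T$ to stay below $\rho$, contradicting maximality of $T$ unless $T$ already covers that interval. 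Hence the bound holds wherever $\beta(t)<\rho$, and the boundary case $\beta(t)=\rho$ follows by continuity as well, since $\beta$ is nondecreasing.

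The step I expect to be the main obstacle is precisely this localization: in contrast to Proposition~\ref{prop:delta}, the defect estimate of Lemma~\ref{lem:delta-bound} is only valid inside the $\rho$-ball around $y(t)$, so the Gronwall estimate and the a priori control of $\|u(t)-y(t)\|_\calH$ must be propagated simultaneously rather than one after the other. I also note that the reference approximation $u_*$ of \eqref{reg-lsq-star}--\eqref{err-u-star} is not logically needed for this proof; it only serves to display $\int_0^t\bar\delta_\rho(s)\,ds$ as a bound that is actually attained along a parametrized curve, which is what motivates the form of the statement.
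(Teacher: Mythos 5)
Your proof is correct and follows essentially the same route as the paper: the paper's (one-line) proof likewise inserts Lemma~\ref{lem:delta-bound} into the differential inequality from Proposition~\ref{prop:delta}, uses the Lipschitz bound $\|f(u)-f(y)\|_\calH\le L\|u-y\|_\calH$, and concludes with Gronwall. Your explicit bootstrap argument for the proviso $\|u(t)-y(t)\|_\calH\le\rho$ is a welcome piece of bookkeeping that the paper leaves implicit in the phrase ``as long as this does not exceed $\rho$''.
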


\begin{proof}
The bound follows from Lemma~\ref{lem:delta-bound} inserted into the proof of Proposition~\ref{prop:delta} and using the Lipschitz condition on $f$ and the Gronwall lemma.
\qed
\end{proof}

We do not know if there exists a similar bound that is independent of the Lipschitz constant $L$, which does not appear in the bound of Proposition~\ref{prop:delta}.

\subsection{Sensitivity to initial values}
Given two initial values $y_0,\wt y_0 \in \calH$, the difference of the correponding solutions $y(t)$ and $\wt y(t)$ of the differential equation \eqref{ivp} are bounded, under the one-sided Lipschitz condition \eqref{Lip}, by
$$
\| y(t)-\wt y(t) \|_\calH \le e^{\ell t} \,\| y_0-\wt y_0 \|_\calH, \qquad t\ge 0;
$$
see e.g. \cite[IV.12]{HW}. There is no analogous result for the regularized approximations $u(t)=\Phi(\param(t))$ and $\wt u(t)=\Phi(\wt \param(t))$ with initial values $u_0 = \Phi(\param_0)$ and $\wt u_0 = \Phi(\wt \param_0)$, not even with the Lipschitz constant $L$ instead of $\ell$.
We only obtain the following bound.

\begin{proposition}\label{prop:sensitivity-delta}
Under the one-sided Lipschitz condition \eqref{Lip} we have
\begin{equation}\label{u-diff}
\| u(t)-\wt u(t) \|_\calH \le e^{\ell t}\, \| u_0-\wt u_0 \|_\calH 
+ \int_0^t  e^{\ell(t-s)} \bigl( \delta(s)+\wt \delta(s)\bigr) ds, \quad\ t\ge 0,
\end{equation}
with the defect size $\delta(t)$ of \eqref{reg-lsq} and analogously $\wt \delta(t)$ corresponding to $\wt u(t)$. 
\end{proposition}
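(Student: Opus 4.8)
The plan is to reduce this to the same Gronwall argument already used for Proposition~\ref{prop:delta}, now applied to the difference of two defect equations. By \eqref{defect}, the regularized approximations satisfy $\dot u(t) = f(u(t)) + d(t)$ with $\|d(t)\|_\calH \le \delta(t)$ and $\dot{\wt u}(t) = f(\wt u(t)) + \wt d(t)$ with $\|\wt d(t)\|_\calH \le \wt\delta(t)$. Subtracting these two identities gives
$$
\dot u(t) - \dot{\wt u}(t) = \bigl( f(u(t)) - f(\wt u(t)) \bigr) + \bigl( d(t) - \wt d(t) \bigr).
$$

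Next I would take the inner product of this identity with $u(t) - \wt u(t)$ and pass to the real part (in the complex case), exactly as in the proof of Proposition~\ref{prop:delta}. The left-hand side becomes $\tfrac12 \tfrac{d}{dt}\|u - \wt u\|_\calH^2 = \|u - \wt u\|_\calH \cdot \tfrac{d}{dt}\|u - \wt u\|_\calH$, while the right-hand side is bounded using the one-sided Lipschitz condition \eqref{Lip} for the $f$-difference term and Cauchy--Schwarz together with $\|d\|_\calH \le \delta$, $\|\wt d\|_\calH \le \wt\delta$ for the defect term:
$$
\Re\langle u - \wt u,\, f(u) - f(\wt u) + d - \wt d \rangle_\calH \le \|u - \wt u\|_\calH \bigl( \ell\,\|u - \wt u\|_\calH + \delta + \wt\delta \bigr).
$$
Dividing by $\|u - \wt u\|_\calH$ yields $\tfrac{d}{dt}\|u - \wt u\|_\calH \le \ell\,\|u - \wt u\|_\calH + \delta(t) + \wt\delta(t)$, and Gronwall's inequality with initial value $\|u_0 - \wt u_0\|_\calH$ gives precisely \eqref{u-diff}.

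There is essentially no serious obstacle here; the argument is routine and parallels Proposition~\ref{prop:delta} line for line. The only minor technical point, handled in the standard way (e.g. by the usual smoothing/one-sided-derivative argument for the function $t \mapsto \|u(t) - \wt u(t)\|_\calH$, or by working with $\|u - \wt u\|_\calH^2 + \eta$ and letting $\eta \downarrow 0$), is the non-differentiability of the norm at times where $u(t) = \wt u(t)$; this does not affect the final estimate. The conceptual content is simply that the defect terms $d$ and $\wt d$ enter the Gronwall estimate additively through their sizes $\delta$ and $\wt\delta$, which is why the bound depends only on $\delta + \wt\delta$ and not on any Lipschitz behaviour of the ill-conditioned projection $\Pi^\eps_{(u,q)}$.
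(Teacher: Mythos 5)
Your argument is correct and is exactly the paper's proof: subtract the two defect equations, bound $\|d(t)-\wt d(t)\|_\calH \le \delta(t)+\wt\delta(t)$, and run the same one-sided-Lipschitz/Gronwall argument as in Proposition~\ref{prop:delta}. Nothing to add.
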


\begin{proof}
    This bound is obtained by the argument of Proposition~\ref{prop:delta} and estimating the difference of the defects $d(t)=\dot u(t)-f(u(t))$ of \eqref{defect} and analogously $\wt d(t)$ in a rough way by
$\| d(t)- \wt d(t) \|_\calH \le \delta(t)+\wt \delta(t)$.
\qed
\end{proof}

Note that the right-hand side of \eqref{u-diff} does not tend to zero as $u_0-\wt u_0\to 0$.
The difference of the defects is bounded in this rough way because there does not seem to be a better way that does not introduce negative powers of $\eps$ into the bound.
The difficulty becomes apparent in the following bound for $\dot u(t)- \dot {\wt u}(t)$, where the second term on the right-hand side is a multiple of 
$\|\param(t)-\wt \param(t)\|_\paramSpace$, which cannot be avoided and cannot be estimated by an $\eps$-independent multiple of $\|u(t)-\wt u(t)\|_\calH$, as we would like.
\begin{proposition}\label{prop:sensitivity}
As a function of $\param$, we denote by $\dot \param(\param)$ the solution of the regularized least squares problem
\begin{equation}\label{lsq-q}
\delta(\param)^2 = \| \Phi'(\param)\dot \param(\param) - f(\Phi(\param)) \|_\calH^2 +\eps^2 \| \dot \param(\param) \|_\paramSpace^2 \quad\text{is minimal}.
\end{equation}
Then, for all $\param,\wt \param\in\paramSpace$ and associated $u=\Phi(\param)$, $\wt u = \Phi(\wt \param)$ and 
$\dot u=\Phi'(\param)\dot \param(\param)$, $\dot {\wt u}=\Phi'(\wt \param)\dot \param(\wt \param)$,
\begin{align*}
\|\dot u - \dot{\wt u}\|_\calH 
&\le  L\|u-\wt u\|_\calH + \tfrac52\beta\,\frac{\bar\delta}{\eps}\, \|\param-\wt \param\|_\paramSpace ,
\\
\|\eps\dot \param(\param) - \eps\dot \param(\wt \param)\|_\paramSpace 
&\le  L\|u-\wt u\|_\calH + \tfrac52\beta\,\frac{\bar\delta}{\eps}\, \|\param-\wt \param\|_\paramSpace,
\\
and \qquad
|\delta(\param)-\delta(\widetilde \param)|&\le 
3L \| u-\wt u \|_\calH + 5 \beta \frac{\bar\delta}\eps \, \| \param-\wt \param \|_\paramSpace,
\end{align*}
where $\beta$ is an upper bound on second order derivatives of $\Phi$ and $\bar\delta$ an upper bound on $\delta$ on the line segment between $\param$ and $\wt \param$, and $L$ is the Lipschitz constant of $f$ in \eqref{Lip}.
\end{proposition}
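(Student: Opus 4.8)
The plan is to reduce everything to the normal equations for \eqref{lsq-q}, namely $M_\eps(q)\dot q(q)=A(q)^\top f(\Phi(q))$ with $A(q)=\Phi'(q)$ and $M_\eps(q)=A(q)^\top A(q)+\eps^2 Q$, and to control the difference of the two solutions by a single algebraic perturbation identity, into which the elementary bounds on regularized least-squares operators collected in the appendix are inserted. Since $f$ is only Lipschitz and need not be differentiable, I do not differentiate along the segment from $\wt q$ to $q$ but work with that finite-difference identity directly; the Lipschitz constant $L$ then enters only through $\|f(\Phi(q))-f(\Phi(\wt q))\|_\calH\le L\|u-\wt u\|_\calH$, and the bound $\beta$ on the second derivatives of $\Phi$ only through $\|\Phi'(q)-\Phi'(\wt q)\|_{\calQ\to\calH}\le\beta\|q-\wt q\|_\calQ$ (mean value theorem along the segment).

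Write $A=\Phi'(q)$, $\wt A=\Phi'(\wt q)$, $E=A-\wt A$, $b=f(u)$, $\wt b=f(\wt u)$, $x=\dot q(q)$, $\wt x=\dot q(\wt q)$, $M_\eps=A^\top A+\eps^2 Q$, $\wt M_\eps=\wt A^\top\wt A+\eps^2 Q$, and let $r=Ax-b=\dot u-f(u)$ and $\wt r=\wt A\wt x-\wt b$ be the residuals. By the definition of $\delta$ we have $\|r\|_\calH\le\delta(q)$, $\eps\|x\|_\calQ\le\delta(q)$ and, with tildes, $\|\wt r\|_\calH\le\bar\delta$, $\eps\|\wt x\|_\calQ\le\bar\delta$ (the values at the endpoint $\wt q$ being bounded by $\bar\delta$). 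From the singular value decomposition of $A$ (after the change of variables $q\mapsto Q^{1/2}q$, $A\mapsto AQ^{-1/2}$ reducing to $Q=I$) one reads off, in the $\calH$- and $\calQ$-norms,
$$
\|A M_\eps^{-1}A^\top\|\le 1,\qquad \|\eps M_\eps^{-1}A^\top\|\le\tfrac12,\qquad \|A M_\eps^{-1}\|\le\tfrac1{2\eps},\qquad \|\eps M_\eps^{-1}\|\le\tfrac1\eps,
$$
and likewise with tildes. Subtracting the normal equations for $q$ and $\wt q$, splitting $A^\top b-\wt A^\top\wt b=A^\top(b-\wt b)+E^\top\wt b$ and $M_\eps-\wt M_\eps=A^\top E+E^\top\wt A$, substituting $\wt b=\wt A\wt x-\wt r$ and collecting terms, one obtains
$$
M_\eps(x-\wt x)=A^\top(b-\wt b)-A^\top E\,\wt x-E^\top\wt r .
$$

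All three bounds follow from this identity. For the first, apply $A$ and add $E\wt x$, using $Ax-\wt A\wt x=A(x-\wt x)+E\wt x=\dot u-\dot{\wt u}$; estimating the four resulting terms by $\|b-\wt b\|$, $\|E\|\|\wt x\|$, $\tfrac1{2\eps}\|E\|\|\wt r\|$ and $\|E\|\|\wt x\|$, and inserting $\|b-\wt b\|\le L\|u-\wt u\|$, $\|E\|\le\beta\|q-\wt q\|$, $\eps\|\wt x\|\le\bar\delta$, $\|\wt r\|\le\bar\delta$, the coefficients of $\tfrac{\beta\bar\delta}{\eps}\|q-\wt q\|$ add up to $1+\tfrac12+1=\tfrac52$. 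For the second bound, apply $\eps M_\eps^{-1}$ to the identity instead and use $\|\eps M_\eps^{-1}A^\top\|\le\tfrac12$, $\|\eps M_\eps^{-1}\|\le\tfrac1\eps$ with the same substitutions. The third bound is a consequence of the first two: since $\delta(q)=\|(r,\eps x)\|$ and $\delta(\wt q)=\|(\wt r,\eps\wt x)\|$ in $\calH\times\calQ$, the reverse triangle inequality in $\calH\times\calQ$ gives $|\delta(q)-\delta(\wt q)|\le\|r-\wt r\|_\calH+\eps\|x-\wt x\|_\calQ$, and the splitting $r-\wt r=(\dot u-\dot{\wt u})-(b-\wt b)$ bounds the first summand by the first bound $+\,L\|u-\wt u\|$ and the second by the second bound.

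I expect the one genuinely delicate point to be the operator estimates in the second paragraph: obtaining the correct $\eps$-powers $\|M_\eps^{-1}A^\top\|\sim\eps^{-1}$ and $\|M_\eps^{-1}\|\sim\eps^{-2}$ from the singular values $\sigma_i$ of $A$ via $\sigma/(\sigma^2+\eps^2)\le\tfrac1{2\eps}$ and $1/(\sigma^2+\eps^2)\le\eps^{-2}$, and carrying the $Q$-weighted norm through correctly (handled by the change of variables above). It is exactly the contributions $E^\top\wt r$ with the $\eps^{-2}$ weight and $\|E\wt x\|\le\eps^{-1}\|E\|\bar\delta$ that cannot be absorbed and that produce the factor $\bar\delta/\eps$; there is no way to replace it by an $\eps$-independent multiple of $\|u-\wt u\|_\calH$, which is precisely the point of the proposition and the reason the quasi-optimality argument of \cite[Section II.6]{Lub08} does not carry over.
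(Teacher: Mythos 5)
Your argument is correct and arrives at the stated constants (in fact with a little room to spare in the second and third bounds), but it takes a genuinely different route from the paper. The paper fixes the right-hand side $b$, differentiates the normal equation $M_\eps(q)\dot q_b(q)=\Phi'(q)^*b$ with respect to $q$ to obtain $\|\partial_q(P_\eps(q)b)\|\le\frac{5\beta}{2\eps}\,\delta_b(q)$, and then integrates this derivative bound along the segment, splitting $\dot u-\dot{\wt u}=(P_\eps(q)-P_\eps(\wt q))f(u)+P_\eps(\wt q)(f(u)-f(\wt u))$; the third bound is extracted from a polarization identity for $\delta(q)^2-\delta(\wt q)^2$ divided by $\delta(q)+\delta(\wt q)$. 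You instead subtract the two normal equations and work with the single finite-difference identity $M_\eps(x-\wt x)=A^\top(b-\wt b)-A^\top E\,\wt x-E^\top\wt r$ (which I checked: the substitution $\wt b=\wt A\wt x-\wt r$ does cancel the $E^\top\wt A\wt x$ term), then hit it with $A$ or with $\eps M_\eps^{-1}$ and invoke the same appendix estimates (Lemma~\ref{lem:P-bound}); the third bound follows from the reverse triangle inequality in the $\eps$-weighted norm on $\calH\times\calQ$ rather than from polarization. What your route buys: (i) no differentiation of $f$ or of the solution map is needed, only the Lipschitz bound on $f$ and the mean-value bound $\|\Phi'(q)-\Phi'(\wt q)\|\le\beta\|q-\wt q\|$; (ii) your $\bar\delta$ only needs to bound $\|\wt r\|$ and $\eps\|\wt x\|$ at the single endpoint $\wt q$, whereas the paper's integration along the segment requires a bound on the defect $\delta_b(\cdot)$ of the auxiliary problem with frozen right-hand side $b=f(\Phi(q))$ at every intermediate point, a quantity that is not literally the $\delta$ of \eqref{lsq-q} there. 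What the paper's route buys is the intermediate derivative bound $\|\partial_q(P_\eps(q)b)\|\le\frac{5\beta}{2\eps}\delta_b(q)$, which is reused elsewhere (e.g.\ for the contraction estimate in the implicit Euler section). Your concluding remark about the unavoidable $\bar\delta/\eps$ factor coming from the $E^\top\wt r$ and $E\wt x$ contributions is exactly the right diagnosis and matches the sharpness example following the proposition.
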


\begin{proof}
We use the adjoint $\Phi'(\param)^*\in L(\calH,\paramSpace)$ of the linear map $\Phi'(\param)\in L(\paramSpace,\calH)$ and the normal equations $M_\eps(\param)\dot \param(\param) = \Phi'(\param)^*f(\Phi(\param))$ with Gramian matrix 
$M_\eps(\param)=\Phi'(\param)^*\Phi'(\param)+\eps^2 I_\paramSpace$. We denote
\begin{align*}
\Phi'(\param)\dot \param(\param) &= \Phi'(\param)M_\eps(\param)^{-1}\Phi'(\param)^*f(\Phi(\param)) \\
&=: P_\eps(\param) f(\Phi(\param)).
\end{align*}
By Lemma~\ref{lem:P-bound}, we have $\|P_\eps(\param)\|_{L(\calH)}\le1$ and therefore 
\begin{equation}\label{eq:project}
\|P_\eps({\wt \param})(f(\Phi(\param))-f(\Phi({\wt \param})))\|_\calH \le L\|\Phi(\param)-\Phi({\wt \param})\|_\calH.
\end{equation}
As for the sensitivity of $P_\eps(\param)$ with respect to $\param$, we consider the regularized least squares problem 
with fixed right hand side $b\in\calH$ and its normal equation
\[
M_\eps(\param)\dot \param_b(\param) = \Phi'(\param)^*b. 
\]
We differentiate
\[
\partial_\param M_\eps(\param)\dot \param_b(\param) + M_\eps(\param)\partial_\param \dot \param_b(\param) = \partial_\param \Phi'(\param)^*b
\]
and obtain
\begin{align*}
&\Phi'(\param)\partial_\param \dot \param_b(\param) 
= \Phi'(\param) M_\eps(\param)^{-1}\left(\partial_\param \Phi'(\param)^*b- \partial_\param M_\eps(\param)\dot \param_b(\param)\right)\\
&= \Phi'(\param) M_\eps(\param)^{-1}\left(\partial_\param \Phi'(\param)^*\left(b-\Phi'(\param)\dot \param_b(\param)\right) - 
\Phi'(\param)^* \partial_\param \Phi'(\param)\dot \param_b(\param)\right). 
\end{align*}
By Lemma~\ref{lem:P-bound}, we have 
$\|\Phi'(\param)M_\eps(\param)^{-1}\|_{L(\paramSpace,\calH)} \le \frac{1}{2\eps}$ and consequently
\begin{align*}
\|\Phi'(\param)\partial_\param \dot \param_b(\param)\|_\calH 
&\le \frac{\beta}{2\eps} \|b-\Phi'(\param)\dot \param_{\textcolor{black}{b}}(\param)\|_\calH + \beta\|\dot \param_b(\param)\|_\paramSpace
\le \frac{3\beta}{2\eps}\,\delta_b(\param)
\end{align*}
with
\[
\delta_b(\param)^2 =  \| \Phi'(\param)\dot \param_b(\param) - b \|_\calH^2 +\eps^2 \| \dot \param_b(\param) \|_\paramSpace^2.
\]
This implies, for 
\[
\partial_\param (P_\eps(\param)b) = \partial_\param (\Phi'(\param)\dot \param_b(\param)) 
= \partial_\param \Phi'(\param) \dot \param_b(\param) + \Phi'(\param) \partial_\param\dot \param_b(\param),
\]
the bound
\[
\|\partial_\param (P_\eps(\param)b)\|_{\calH} \le \frac{5\beta}{2\eps}\,\delta_b(\param).
\]
Using this bound for the right-hand side $b=f(\Phi(\param))$ and combining it with the Lipschitz estimate \eqref{eq:project}, we obtain
\begin{align*}
   &\|\Phi'(\param)\dot \param(\param)-\Phi'({\wt \param})\dot \param({\wt \param})\|_\calH \\*[1ex]
   &\le 
   \|(P_\eps(\param)-P_\eps({\wt \param}))f(\Phi(\param))\|_\calH + \|P_\eps({\wt \param})(f(\Phi(\param))-f(\Phi({\wt \param})))\|_\calH \\
   &\le     \frac{5\beta\bar\delta}{2\eps} \|\param-{\wt \param}\|_\paramSpace + L\|\Phi(\param)-\Phi({\wt \param})\|_\calH,
\end{align*}
which is the first of the stated bounds. The second one follows in the same way. Finally, with $u=\Phi(\param)$ and $\dot u =\Phi'(\param)\dot \param$ and analogously $\wt u$ and $\dot{\wt u}$ we have by the 
Cauchy--Schwarz inequality and \eqref{lsq-q}
\begin{align*}
    &|\delta(\param)^2-\delta(\wt \param)^2| = \big| \bigl\langle  (\dot u - f(u)) + (\dot{\wt u} - f(\wt u)), 
    (\dot u - \dot{\wt u}) - (f(u)-f(\wt u)) \bigr\rangle_\calH 
    \\
    &\qquad\qquad \qquad\qquad 
    +\eps^2 \bigl\langle \dot \param + \dot{\wt \param}, \dot \param - \dot{\wt \param} \bigr\rangle_\paramSpace \big|
    \\
    &\le \bigl(\delta(\param) + \delta(\wt \param)\bigr) \,\bigl( \| \dot u - \dot{\wt u} \|_\calH + \| f(u)-f(\wt u) \|_\calH \bigr)
    + \bigl(\delta(\param) + \delta(\wt \param)\bigr) \, \eps \| \dot \param - \dot{\wt \param} \|_\paramSpace,
\end{align*}
which yields 
\[
|\delta(\param)-\delta(\wt \param)| \le \| \dot u - \dot{\wt u} \|_\calH + L\| u - \wt u \|_\calH + \eps \| \dot \param - \dot{\wt \param} \|_\paramSpace,
\]
and the stated bound follows from the first two bounds.
\qed
\end{proof}

\begin{remark} As far as the inverse powers with respect to $\eps$ are concerned, the previous estimates are sharp, as the following one-dimensional example with $\cal H = \paramSpace = \R$, $\Phi(\param)=\frac12 \param^2$, and $f(u) = 1$ illustrates. In this case we have
\[
\dot \param(\param) =  \frac{\param}{\param^2 + \eps^2},\quad
d(\param) = \Phi'(\param) \dot \param(\param) - f(u) = -\frac{\eps^2}{\param^2 + \eps^2}
\]
and
\[
\partial_\param \dot \param(\param) = -\frac{\param^2}{\param^2+\eps^2} + \frac{\eps^2}{(\param^2+\eps^2)^2} = 
-\frac{d(\param)}{\param^2+\eps^2} - \frac{\param^2}{\param^2+\eps^2}.
\]
Moreover, 
\begin{align*}
|\dot u - \dot{\wt u}| &= 
\left|\frac{\param}{\param^2+\eps^2}d(\wt \param)+\frac{\wt \param}{\wt \param^2+\eps^2}d(\param)\right| \left|\param-\wt \param\right| \\
&= |\delta(\param)-\delta(\wt \param)| 
\end{align*}
which implies
\begin{align*}
&|\dot u - \dot{\wt u}| = |\delta(\param)-\delta(\wt \param)| \le \frac{\bar\delta}{\eps}|\param-\wt \param|\\
&|\eps \dot \param(\param) -\eps \dot \param(\wt \param) | \le \left(\eps+\frac{\bar\delta}{\eps}\right)|\param-\wt \param|.
\end{align*}
These tight estimates exhibit the factor $\bar\delta/\eps$ in the same way as the general bounds of Proposition~\ref{prop:sensitivity}.
\end{remark}

\begin{remark}
    Using Proposition~\ref{prop:sensitivity} and Gronwall's lemma, we obtain the bound
    \[
\|u(t)-\wt u(t)\|_\calH + \eps \|\param(t)-\wt \param(t)\|_\paramSpace \le e^{\omega t} \left( \|u_0-\wt u_0\|_\calH + \eps \|\param_0-\wt \param_0\|_\paramSpace\right)
\]
with the exponent $\omega = L+ \tfrac52\beta\bar\delta/\eps^2$. For nonlinear parametrizations (where $\beta\ne 0$), which is our main interest in this paper, this is not a useful estimate for small $\eps$. For linear parametrizations ($\beta=0$), however, we even obtain
$$
\|u(t)-\wt u(t)\|_\calH  \le e^{L t}  \|u_0-\wt u_0\|_\calH 
$$
without any dependence on $\|\param_0-\wt \param_0\|_\paramSpace$. Hence, with respect to error propagation, linear and nonlinear near-singular parametrizations behave very differently.
\end{remark}

\section{Time discretization by the regularized parametric Euler method}
\label{sec:time-disc}

We study time-stepping methods in the framework of Sections~\ref{sec:regdyn} and~\ref{sec:err}, first based on the Euler method and in the next section on general Runge--Kutta methods. The unusual feature is that the differential equation for the parameters $\param$, which is the equation that is actually discretized, is ill-behaved, but nevertheless we find better behaviour of $u=\Phi(\param)$. We only assume that the vector field $f$ in the original differential equation \eqref{ivp} for $y$ is sufficiently differentiable, and that the parametrization map $\Phi$ has bounded second derivatives (but a regular parametrization is not assumed). These properties will be used to derive error bounds for the discrete approximations $u_n=\Phi(\param_n)$ at $t_n=t_0+nh$, for stepsizes $h>0$ that need to be suitably restricted in terms of the defect in the regularized least squares problem and the regularization parameter.  No error bounds are derived for the parameters $\param_n$.

\subsection{The regularized explicit Euler method}
A step of the explicit Euler method applied to the differential equation for the parameters $\param$, starting from $\param_n$ at time $t_n$ with the regularization parameter $\eps_n$, reads
\begin{equation}\label{euler-a}
\param_{n+1}=\param_n + h \dot \param_n, \qquad u_{n+1}=\Phi(\param_{n+1}),
\end{equation}
where $\dot \param_n$ is the solution of the regularized linear least squares problem
\begin{equation}\label{euler-b}
\delta_n^2 := \| \Phi'(\param_n)\dot \param_n - f(u_n)\|_\calH^2 + \eps_n^2 \| \dot \param_n \|_\paramSpace^2 \quad\text{is minimal.}
\end{equation}

\subsection{Local error bound}
We first consider the local error, that is the error after one step $u_1-y(t_1)$, where $y(t)$ is the solution of \eqref{ivp} starting from $y(t_0)=u_0=\Phi(\param_0)$. \textcolor{black}{The following results require a stepsize restriction of the form
\begin{equation}\label{euler-h}
h \delta_0 \le c \, \eps_0^2,
\end{equation}
which is mild for a small defect size $\delta_0$. Condition \eqref{euler-h}  can equally be viewed as a restriction of the regularization parameter $\eps_0$, which must not be chosen small compared to $\sqrt{h \delta_0}$. Note that a lower bound on $\eps_0$ did not occur in the time-continuous setting, but it becomes essential after time discretization.}

\begin{lemma}\label{lem:loc-err-euler}
\textcolor{black}{Under condition \eqref{euler-h},}
the local error of the regularized Euler method starting from $y(t_0)=u_0$ is bounded by
\begin{equation}\label{loc-err-euler}
\| u_1-y(t_1) \|_\calH \le c_1 h\delta_0 + c_2 h^2
\end{equation}
with $c_1=1+\frac12c\beta$, where $\beta$ is a bound of the second derivative of $\Phi$ in a neighbourhood of $\param_0$,
and $c_2=\frac12\max_{t_0\le t \le t_1} \| \ddot y(t) \|_\calH$.
\end{lemma}

\begin{proof} We have by Taylor expansion
\begin{align*}
y(t_1)- y(t_0) &= h \dot y(t_0) + \int_{t_0}^{t_1} (t_1-t) \, \ddot y(t) \, dt
\\
&=  h f(u_0) + O(h^2)
\end{align*}
and we have
\begin{align*}
u_1 - u_0 &= \Phi(\param_1) - \Phi(\param_0)
\\
&= \Phi'(\param_0)(\param_1\!-\!\param_0) + \!\!\int_0^1 \! (1\!-\!\xi) \Phi''(\param_0\!+\!\xi(\param_1\!-\!\param_0)) [\param_1\!-\!\param_0,\param_1\!-\!\param_0] \,d\xi
\\
&= \Phi'(\param_0) h \dot \param_0 + O(\| h\dot \param_0 \|_\paramSpace^2)
\\
&= h f(u_0) + O(h\delta_0) + O\biggl(\Bigl(\frac{h\delta_0}{\eps_0}\Bigr)^2\biggr),
\end{align*}
where the last line results from the definition of $\delta_0$, which is an upper bound of both 
$\| \Phi'(\param_0)  \dot \param_0 - f(u_0) \|_\calH$ and $\eps_0 \|\dot \param_0\|_\paramSpace$. This implies $\| h\dot \param_0 \|_\paramSpace \le h\delta_0/\eps_0$.
Under the stepsize restriction $h \delta_0 \le c \eps_0^2$, the last term is also $O(h\delta_0)$. Subtracting the two formulas and tracing the constants in the $O(\cdot)$ terms yields the stated result.
\qed
\end{proof}

\begin{remark}\label{rem-step-size-1} The result of Lemma~\ref{lem:loc-err-euler} holds without any fixed bound on $\dot \param_0$. If, however, $\| \dot \param_0\|_\paramSpace \le \gamma$ with a moderate constant $\gamma$, which is satisfied if 
\begin{equation} \label{euler-eps}
\delta_0\le \gamma \eps_0,
\end{equation}
then the above proof shows that the error bound \eqref{loc-err-euler} is satisfied with $c_1=1$ and $c_2=\frac12\max_{t_0\le t \le t_1} \| \ddot y(t) \|_\calH + \beta \gamma^2$ with the same $\beta$ as in the lemma. This clearly indicates that while the stepsize restriction \eqref{euler-h} is sufficient for the stated error bound, it is not always necessary. \textcolor{black}{However, both \eqref{euler-h} and \eqref{euler-eps} indicate that the regularization parameter must not be chosen small with respect to the stepsize and the defect, as is also observed in numerical experiments.}
\end{remark}

\begin{remark}\label{a-posteriori}
A local error estimate of a similar structure, without any restriction on the step size $h$ or the defect $\delta_0$, is readily available by keeping an additional a posteriori error term. Comparing the explicit Euler approximation 
$y_1=y_0+hf(y_0)$ as an intermediate term with $u_1=\Phi(\param_1)$ yields
\begin{align}
&\| u_1-y(t_1) \|_\calH \le \| u_1 - y_1\|_\calH + \| y_1 - y(t_1) \|_\calH
\nonumber
\\
&\le \| u_1 - y_0 - h\Phi'(\param_0)\dot{\param}_0 \|_\calH + h \| \Phi'(\param_0)\dot{\param}_0 - f(u_0) \|_\calH + \| y_1 - y(t_1) \|_\calH
\nonumber
\\
&\le \| u_1 - y_0 - h\Phi'(\param_0)\dot{\param}_0 \|_\calH + h \delta_0 + O(h^2).
\label{loc-err-euler-2}
\end{align}
The first term in the last line is computable with the quantities derived during the time integration of the scheme and comes at the cost of evaluating an additional $\mathcal H$-norm.
Under the step size restriction \eqref{euler-h}, or in the case of Remark~\ref{rem-step-size-1}, the term is of the optimal order $O(h\delta_0+ h^2)$.
\end{remark}

\begin{remark}
 \textcolor{black}{
It is a natural question to ask if the stepsize restriction \eqref{euler-h} can be avoided by choosing instead of the explicit Euler method an implicit method for stiff differential equations such as the implicit Euler method. Theoretical and numerical results in \cite{LN25}
indicate that the same condition \eqref{euler-h} or \eqref{euler-eps} is also required for implicit methods. 
}   
\end{remark}

\subsection{Global error bound (using stable error propagation by the exact flow)}
Using the standard argument of Lady Windermere's fan with error propagation by the exact flow, see the book by Hairer, N\o rsett \& Wanner \cite[II.3]{HNW}, we obtain the following global error bound from Lemma~\ref{lem:loc-err-euler} and condition \eqref{Lip}.

\begin{theorem}\label{prop:glob-err-euler}
Under condition \eqref{Lip} and the stepsize restriction 
$$
h \delta_n \le c \,\eps_n^2, \qquad 0\le n \le N,
$$
the error of the regularized Euler method \eqref{euler-a}--\eqref{euler-b} with initial value $y_0=u_0=\Phi(\param_0)$ is bounded, for $t_n=nh\le t_N \le \bar t$, by 
$$ 
\| u_n-y(t_n)\|_\calH =O(\delta+h) \quad\text{with}\quad \delta=\max_n \delta_n,
$$
or more precisely (compare with Proposition~\ref{prop:delta}),
$$
\| u_n - y(t_n) \|_\calH \le h \sum_{j=0}^{n-1} e^{\ell(t_{n-1}-t_j)} \, \bigl(c_1\delta_j + c_2  h ),
$$
where $c_1=1+\frac12 c\beta$ with $\beta$ a bound of the second derivative of $\Phi$ in a neighbourhood of  the solution,
and $c_2$ is a bound of $f'f$ in a neighbourhood of the solution $y(t)$, i.e. a bound of second derivatives of solutions $\widetilde y$ of the differential equation $\dot{\widetilde y}=f(\widetilde y)$ with initial values in a neighbourhood of the solution trajectory $\{ y(t): 0 \le t \le \bar t \}$. 
\end{theorem}

\subsection{Error propagation by the numerical method}
The Euler method applied to \eqref{ivp} is stable in the sense that the numerical results $y_1$ and $\wt y_1$ obtained from starting values $y_0$ and $\wt y_0$, respectively, differ by
$$
\| y_1 - \wt y_1 \|_\calH \le (1+hL) \| y_0 - \wt y_0 \|_\calH,
$$
where $L$ is again the Lipschitz constant of $f$. For the regularized Euler method we only obtain, similarly to the proof of Lemma~\ref{lem:loc-err-euler}, a discrete analogue of \eqref{u-diff}:
\begin{equation}\label{u-diff-Euler}
    \| u_1 - \wt u_1 \|_\calH \le (1+hL) \| u_0 - \wt u_0 \|_\calH + h\delta_0 + h\wt\delta_0.
\end{equation}
This is not good enough to be used in Lady Windermere's fan with error propagation by the numerical method, which is the most common way to prove error bounds for numerical methods for nonstiff differential equations; see \cite{HNW}.

\bch
We can, however, use Lady Windermere's fan with error propagation by the abstract unparametrized numerical method (here the Euler method), both to compare the results of the parametrized and unparametrized method and to bound the global error of the unparametrized method, which combined give a global error bound for the regularized parametrized method. In the present case this procedure would yield essentially the same error bound as in Theorem~\ref{prop:glob-err-euler}.
We will use this procedure in Section~\ref{sec:psi}, where error propagation by the exact flow cannot be used because it would require higher regularity of the numerical solution that is not known.
\ech

\subsection{An {\it a priori} error bound for the regularized explicit Euler method}
The bound in Theorem~\ref{prop:glob-err-euler} is partly {\it a priori} (in its dependence on $\ell$, $c_1$ and $c_2$) and partly {\it a posteriori} (in its dependence on the defect sizes $\delta_j$). In terms of the method-independent \textcolor{black}{-- and in this sense {\it a priori} --} defect sizes $\bar\delta_\rho(t)$ $\rho$-near to the exact solution as defined in
\eqref{delta-rho}, we have the following discrete analogue of Proposition~\ref{prop:delta-star}.

\begin{proposition}
    \label{prop:delta-star-euler}
    In the situation of Theorem~\ref{prop:glob-err-euler} we have the \emph{a priori} error bound, for $0\le t_n=nh \le \bar t$,
    $$
\| u_n-y(t_n) \|_\calH \le C_1 h \sum_{j=1}^n \bar\delta_\rho(t_j) + C_2 h,
$$
where $C_1$ and $C_2$ depend on $L\bar t$ but are independent of $h$ and $\eps$. This bound is valid as long as the right-hand side does not exceed $\rho$.
\end{proposition}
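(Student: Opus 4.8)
The plan is to mimic, at the discrete level, the way Proposition~\ref{prop:delta-star} was obtained from Proposition~\ref{prop:delta} and Lemma~\ref{lem:delta-bound}: combine the precise (partly a posteriori) global bound of Proposition~\ref{prop:glob-err-euler} with a discrete analogue of Lemma~\ref{lem:delta-bound} that controls the method-dependent defect sizes $\delta_n$ of \eqref{euler-b} in terms of the method-independent $\bar\delta_\rho(t_n)$, and then close the resulting recursion by a discrete Gronwall inequality.

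First I would observe that the proof of Lemma~\ref{lem:delta-bound} carries over verbatim to the discrete least-squares problem \eqref{euler-b}, since this has exactly the structure $\min_{\dot q}(\|\Phi'(q)\dot q-f(\Phi(q))\|_\calH^2+\eps^2\|\dot q\|_\calQ^2)$ with $q=q_n$. Concretely, for any step index $j$ with $\|u_j-y(t_j)\|_\calH\le\rho$, taking $\dot q_+\in\calQ$ that realizes the minimum in \eqref{delta-rho} at $q=q_j$, $t=t_j$, and using $\dot y(t_j)=f(y(t_j))$, the triangle inequality in $\calH$, and $\eps_j\|\dot q_+\|_\calQ\le\bar\delta_\rho(t_j)$, one obtains
\[
\delta_j \le \bar\delta_\rho(t_j) + \| f(u_j)-f(y(t_j))\|_\calH \le \bar\delta_\rho(t_j) + L\,\| u_j - y(t_j)\|_\calH ,
\]
where, as in \eqref{delta-rho}, $\bar\delta_\rho(t_j)$ is understood with the regularization parameter $\eps_j$.

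Next I would insert this into the precise bound of Proposition~\ref{prop:glob-err-euler}. Writing $e_j=\|u_j-y(t_j)\|_\calH$ this gives
\[
e_n \le h\sum_{j=0}^{n-1} e^{\ell(t_{n-1}-t_j)}\bigl( c_1\bar\delta_\rho(t_j) + c_1 L\,e_j + c_2 h\bigr).
\]
Here the $c_1\bar\delta_\rho(t_j)$-terms already have the asserted form; the $c_2 h$-terms sum to at most $c_2\bar t\,e^{\ell\bar t}\,h=O(h)$ (using $t_nh\le\bar t\,h$); and the $c_1 L\,e_j$-terms are removed by the standard discrete Gronwall inequality, at the cost of a multiplicative constant depending only on $c_1$ and on $L\bar t$ (recall $\ell\le L$). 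Collecting constants yields $C_1,C_2$ depending only on $L\bar t$ and on the fixed quantities $c_1,c_2,\ell$ of Proposition~\ref{prop:glob-err-euler}, in particular independent of $h$ and $\eps$.

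The one point needing care — and the main, though mild, obstacle — is that the discrete Lemma~\ref{lem:delta-bound} step above is only valid at steps where $e_j\le\rho$, so the argument must be run as an induction on $n$. One uses that the claimed right-hand side $C_1 h\sum_{j=1}^n\bar\delta_\rho(t_j)+C_2 h$ is nondecreasing in $n$ and, by hypothesis, does not exceed $\rho$: if the asserted bound (hence $e_j\le\rho$) holds for all indices smaller than $n$, then the defect estimate applies at those steps, the recursion together with the discrete Gronwall lemma delivers the bound at $t_n$, and since that bound is $\le\rho$ the induction closes. Finally one checks that no hidden $\eps$-dependence enters: the stepsize restriction $h\delta_n\le c\,\eps_n^2$ is a standing hypothesis of Proposition~\ref{prop:glob-err-euler} and is not used quantitatively here, while $\bar\delta_\rho$ and $\delta_n$ enter all estimates only linearly.
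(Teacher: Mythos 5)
Your proposal is correct and follows essentially the same route as the paper: apply the (still valid) bound of Lemma~\ref{lem:delta-bound} at each step to estimate $\delta_j\le\bar\delta_\rho(t_j)+L\|u_j-y(t_j)\|_\calH$, insert this into the global bound of Proposition~\ref{prop:glob-err-euler}, and close with the discrete Gronwall inequality. The only difference is that you spell out the induction needed to keep $\|u_j-y(t_j)\|_\calH\le\rho$ along the way, which the paper leaves implicit in the phrase ``as long as the right-hand side does not exceed $\rho$''.
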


\begin{proof}
We still have the bound of Lemma~\ref{lem:delta-bound},
$$
\delta_n \le \bar\delta_\rho(t_n) + \| f(u_n) - f(y(t_n)) \|_\calH.
$$
Inserting this bound in the local error bound of Theorem~\ref{prop:glob-err-euler} and using the Lipschitz condition on $f$ yields
$$
\| u_n-y(t_n) \|_\calH \le c_1 h \sum_{j=1}^n \bar\delta_\rho(t_j) + c_1 h \sum_{j=1}^n L\,\| u_j - y(t_j) \|_\calH + c_2 h,
$$
and the discrete Gronwall inequality yields the result.
\qed
\end{proof}

 \section{Regularized parametric explicit Runge--Kutta methods} \label{sec:rk}
A step of an explicit Runge--Kutta method of order $p$ with coefficients $a_{ij}$ and $b_j$ applied to the differential equation for the parameters $\param$ reads as follows. Starting from $\param_n$ at time $t_n$ with the regularization parameter $\eps_n$, we first compute consecutively the internal stages (for $i=1,\dots,s$)
\begin{equation}\label{rk-a}
\param_{n,i}=\param_n + h \sum_{j=1}^{i-1} a_{ij} \dot \param_{n,j}, \qquad u_{n,i}=\Phi(\param_{n,i}),
\end{equation}
and $\dot \param_{n,i}$ as the solution of the regularized linear least squares problem
\begin{equation}\label{rk-b}
\delta_{n,i}^2 := \| \Phi'(\param_{n,i})\dot \param_{n,i} - f(u_{n,i})\|_\calH^2 + \eps_n^2 \| \dot \param_{n,i} \|_\paramSpace^2 \quad\text{is minimal,}
\end{equation}
\textcolor{black}{which after reformulation in damped form, see \eqref{reg-damp}, can be solved by a large class of numerical methods.}
Finally, the new value is computed as
\begin{equation}\label{rk-c}
\param_{n+1}=\param_n + h \sum_{j=1}^s b_j \dot \param_{n,j}, \qquad u_{n+1}=\Phi(\param_{n+1}).
\end{equation}
We first bound the local error.
\begin{lemma}\label{lem:loc-err-rk}
Let $\delta_0=\max_i \delta_{0,i}$. Under the stepsize restriction 
\begin{equation}\label{rk-h}
h \delta_0 \le c \,\eps_0^2,
\end{equation}
the local error of the regularized $p$-th order Runge--Kutta method starting from $y(t_0)=u_0=\Phi(\param_0)$ is bounded by
\begin{equation}\label{loc-err-rk}
\| u_1-y(t_1) \|_\calH \le c_1 h\delta_0 + c_2 h^{p+1},
\end{equation}
where $c_1$ is a constant times $1+\frac12c\beta$ with the bound $\beta$ of the second derivative of $\Phi$ in a neighbourhood of $\param_0$,
and $c_2 h^{p+1}$ is the bound for the local error of the $p$-th order Runge--Kutta method applied to \eqref{ivp}.
\end{lemma}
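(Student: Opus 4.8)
The plan is to follow the strategy of the proof of Lemma~\ref{lem:loc-err-euler}: insert the result $y_1$ of one step of the \emph{unparametrized} Runge--Kutta method applied to \eqref{ivp}, started from $y_0=u_0$, as an intermediate quantity, so that $\| u_1 - y(t_1)\|_\calH \le \| u_1 - y_1 \|_\calH + \| y_1 - y(t_1) \|_\calH$. The second term is bounded by $c_2 h^{p+1}$ because the scheme has order $p$ and $f$ is sufficiently differentiable near the solution; this is exactly the classical local error estimate named in the statement. Hence everything reduces to showing $\| u_1 - y_1 \|_\calH \le c_1 h \delta_0$ with $c_1$ proportional to $1+c\beta$.

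The quantitative inputs for that are, for every internal stage $i$, the two consequences of \eqref{rk-b}, namely $\| \Phi'(q_{0,i})\dot q_{0,i} - f(u_{0,i})\|_\calH \le \delta_{0,i} \le \delta_0$ and $\eps_0 \| \dot q_{0,i}\|_\calQ \le \delta_0$. From the latter, $\| h\dot q_{0,i}\|_\calQ \le h\delta_0/\eps_0$ and --- this is where the stepsize restriction \eqref{rk-h} does its work --- $\| h\dot q_{0,i}\|_\calQ^2 \le (h\delta_0)(h\delta_0/\eps_0^2) \le c\, h\delta_0$. Taylor-expanding $u_{0,i}=\Phi(q_{0,i})$ about $q_0$ with second-order remainder controlled by $\beta$, then replacing $\Phi'(q_0)\dot q_{0,j}$ first by $\Phi'(q_{0,j})\dot q_{0,j}$ (the error being $O(\beta \| q_{0,i}-q_0\|_\calQ \| \dot q_{0,j}\|_\calQ)$, again $O(c\beta\, h\delta_0)$ after multiplying by $h$ and summing over stages) and then by $f(u_{0,j})$ (error $O(\delta_0)$ per stage), I expect to arrive at the ``perturbed Runge--Kutta'' identities
\[
u_{0,i}=u_0+h\sum_{j<i}a_{ij}f(u_{0,j})+O\bigl((1+c\beta)h\delta_0\bigr),
\qquad
u_1=u_0+h\sum_{j} b_j f(u_{0,j})+O\bigl((1+c\beta)h\delta_0\bigr),
\]
with constants depending only on the Runge--Kutta coefficients, on $\beta$, and on a bound of $\| \Phi'\|$ near $q_0$.

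Comparing these with the classical stages $Y_i=u_0+h\sum_{j<i}a_{ij}f(Y_j)$ and $y_1=u_0+h\sum_j b_j f(Y_j)$, subtracting, and using the local Lipschitz continuity of $f$ gives $\| u_{0,i}-Y_i\|_\calH \le hL\sum_{j<i}|a_{ij}|\,\| u_{0,j}-Y_j\|_\calH + O((1+c\beta)h\delta_0)$; a finite induction over the $s$ stages then yields $\| u_{0,i}-Y_i\|_\calH = O((1+c\beta)h\delta_0)$ and, in the same way, $\| u_1-y_1\|_\calH = O((1+c\beta)h\delta_0)$. Combining with $\| y_1 - y(t_1)\|_\calH \le c_2 h^{p+1}$ gives \eqref{loc-err-rk}, with $c_1$ a scheme-dependent constant times $1+c\beta$.

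The main obstacle is the bookkeeping of the near-singular factors $\delta_0/\eps_0$ through the several internal stages: the stage increments $h\dot q_{0,i}$ carry a factor $\eps_0^{-1}$ and their squares a factor $\eps_0^{-2}$, and the whole argument hinges on the observation that \eqref{rk-h} converts every such quadratic term into a clean $O(h\delta_0)$, uniformly over stages, and that the errors accumulated in the stage-by-stage comparison stay of this order after a bounded number of Lipschitz steps. A minor additional point to check is that for $h$ small enough all stage values $q_{0,i}$ remain in the neighbourhood of $q_0$ on which $\beta$ bounds $\Phi''$ and $\| \Phi'\|$ is bounded, which holds since $\| q_{0,i}-q_0\|_\calQ = O(h\delta_0/\eps_0)$.
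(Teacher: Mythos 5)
Your proposal is correct and follows essentially the same route as the paper: compare $u_1$ with the result $y_1$ of the unparametrized Runge--Kutta step via Taylor expansion of $\Phi$ at $q_0$, use $\eps_0\|\dot q_{0,j}\|_\calQ\le\delta_0$ and $\|\Phi'(q_{0,j})\dot q_{0,j}-f(u_{0,j})\|_\calH\le\delta_0$ to turn the stage recursions into perturbed Runge--Kutta identities with defects $O(h\delta_0)+O((h\delta_0/\eps_0)^2)$, absorb the quadratic terms via \eqref{rk-h}, and add the classical $O(h^{p+1})$ bound for $y_1-y(t_1)$. The stage-by-stage Lipschitz induction you spell out is exactly what the paper's "using also the Lipschitz continuity of $f$" compresses.
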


\begin{proof}
We note that \eqref{rk-b} implies $\|\eps_0\dot \param_{0,j}\|_\paramSpace \le \delta_0$ and hence 
$\| h \dot \param_{0,j}\|_\paramSpace \le h\delta_0/\eps_0$. Moreover, \eqref{rk-b} also implies
$\| \Phi'(\param_{0,j})\dot \param_{0,j}- f(u_{0,j})\|_\calH \le \delta_0$.
This yields 
\begin{align*}
&u_{0,i} - u_0 = \Phi(\param_{0,i}) - \Phi(\param_0)
= \Phi'(\param_0) h \sum_{j=1}^{i-1} a_{ij} \dot \param_{0,j} + O\biggl(\Bigl(\frac{h\delta_0}{\eps_0}\Bigr)^2\biggr)
\\
&= h \sum_{j=1}^{i-1} a_{ij} \Phi'(\param_{0,j} )\dot \param_{0,j} 
-  \sum_{j=1}^{i-1} a_{ij} \bigl( \Phi'(\param_{0,j} ) - \Phi'(\param_{0} )\bigr)h\dot \param_{0,j}+
O\biggl(\Bigl(\frac{h\delta_0}{\eps_0}\Bigr)^2\biggr)
\\
&= h \sum_{j=1}^{i-1} a_{ij} f(u_{0,j}) + O(h\delta_0) + O\biggl(\Bigl(\frac{h\delta_0}{\eps_0}\Bigr)^2\biggr)
+ O\biggl(\Bigl(\frac{h\delta_0}{\eps_0}\Bigr)^2\biggr)
\end{align*}
and in the same way
$$
u_1-u_0 = h \sum_{j=1}^{s} b_j f(u_{0,j}) + O(h\delta_0) + O\biggl(\Bigl(\frac{h\delta_0}{\eps_0}\Bigr)^2\biggr).
$$
Apart from the $O(\cdot)$ terms, these formulae are those that define the result $y_1$ of one step of the Runge--Kutta method applied to \eqref{ivp}. So we obtain, using also the Lipschitz continuity of $f$,
$$
u_1 - y_1 = O(h\delta_0) + O\biggl(\Bigl(\frac{h\delta_0}{\eps_0}\Bigr)^2\biggr).
$$
Since the method is of order $p$ and $f$ is sufficiently differentiable, we have
$$
y_1 - y(t_1) = O(h^{p+1}).
$$
Noting that under the stepsize restriction \eqref{rk-h} we have $({h\delta_0}/{\eps_0})^2 \le ch\delta_0$,
the error bound \eqref{loc-err-rk} follows.
\qed
\end{proof}

As before, using the local error bound in Lady Windermere's fan with error propagation by the exact solutions, we obtain the following global error bound for the $p$-th order Runge method. We formulate the result for variable stepsizes $h_n$, so that $t_{n+1}=t_n+h_n$ and $t_N=\bar t$.

\begin{theorem}\label{prop:glob-err-rk}
Under condition \eqref{Lip} and the stepsize restriction 
$$
h_n \delta_{n,i} \le c \,\eps_n^2, \qquad 0\le n \le N,\ \  i=1,\dots,s,
$$
the error of the regularized $p$-th order Runge--Kutta  method \eqref{rk-a}--\eqref{rk-c} with initial value $y_0=u_0=\Phi(\param_0)$ is bounded, for $t_n\le \bar t$, by 
$$
\| u_n-y(t_n)\|_\calH =O(\delta+h^p)
$$
with $\delta=\max_{n,i} \delta_{n,i}$ and $h=\max_n h_n$. The constants symbolized by the O-notation are independent of $\,\delta$ and the regularization parameters $\eps_n$ (under the given stepsize restriction), and of the stepsize sequence $(h_k)$ and $n$ with $t_n\le \bar t$.
\end{theorem}

\section{Choice of the regularization parameter and the stepsize}
\label{sec:eps-h}

The algorithm below chooses the regularization parameter $\eps_n$ in the $n$th time step as large as possible such that the defect
$\delta_n$ is within a given factor of the defect attained for tiny $\eps$ or within a prescribed tolerance. The stepsize $h_n$ is chosen such that the critical quadratic error terms are of size $h_n\delta_n$.

For arbitrary $\eps>0$, in the $n$th time step we let $\dot \param_n(\eps)$ be the solution of the regularized linear least squares problem with regularization parameter $\eps$ such that
$$
\delta_n(\eps)^2 := \| \Phi'(\param_n)\dot \param_n(\eps) -f(u_n) \|_\calH^2 + \eps^2 \| \dot \param_n(\eps) \|_\paramSpace^2 \quad\text{is minimal.}
$$
Let $\eps_n^0$ and $h_n^0$ be initializations of the regularization parameter and the stepsize, respectively. These might be the values from the previous time step. Let $\eps_\star$ be a tiny reference parameter such that $\delta_n(\eps_\star)$ can still be computed reliably (just so). Let $\delta_{\min}>0$ be a given threshold.

\subsection{Choice of the regularization parameter $\eps_n$}

Compute $\delta_n(\eps_\star)$ and set $\delta_n^{\rm tol} = \max(17 \,\delta_n(\eps_\star),\delta_{\min})$ as the target defect size\footnote{The factor 17, chosen in honour of Gau\ss,  can be replaced by a different factor {\it ad libitum}.}.
We aim at having $\delta_n(\eps_n) \approx \delta_n^{\rm tol}$. We use 1 or 2 Newton iterations for the equation $\delta_n(\eps_n)^2 - (\delta_n^{\rm tol})^2=0$ (except in the very first step where a good starting value is not available and more iterations might be needed). In view of Lemma~\ref{lem:reg-lsq}, which shows that this is a monotonically increasing concave smooth function with the derivative $d\delta_n^2/d\eps^2=\| \dot \param_n(\eps)\|^2_\paramSpace$, we have the Newton iteration
\begin{align*}
   \left( \varepsilon_n^{k+1} \right)^2 =    \left( \varepsilon_n^{k} \right)^2 - \frac{\delta_n(\varepsilon_n^{k})^2 -\left(\delta_n^{\text{tol}}\right)^2}{\|\dot \param_n(\eps_n^k)\|_\paramSpace^2}.
\end{align*}
Alternatively, we can proceed in a Levenberg--Marquardt style, cf.~\cite{more2006levenberg}:\\ 
Initialize $\eps_n=\eps_n^0$.\\
If $\delta_n(\eps_n) \le \delta_n^{\rm tol}$
\\
$\phantom{.}\quad$ then while $\delta_n(3\eps_n) \le \delta_n^{\rm tol}$ set $\eps_n:=3\eps_n$
\\
$\phantom{.}\quad$ else while $\delta_n(\eps_n)>\delta_n^{\rm tol}$ set $\eps_n:=\eps_n/3$.

\medskip\noindent
In the following we set $\dot \param_n=\dot \param_n(\eps_n)$ and $\delta_n=\delta_n(\eps_n)$.

\subsection{Choice of the stepsize $h_n$}
The stepsize $h_n$ is chosen such that
$\| \Phi''(\param_n)[h_n\dot \param_n,h_n\dot \param_n] \|_\calH \approx h_n \delta_n$.
This is satisfied for the choice
$$
h_n = \frac{h_n^0\delta_n }{\|\Phi'(\param_n+h_n^0\dot \param_n)\dot \param_n -\Phi'(\param_n)\dot \param_n \|_\calH}.
$$

\subsection{Regularized Runge--Kutta step with $\eps_n$ and $h_n$}
We compute $\param_{n+1}$ and $u_{n+1}=\Phi(\param_{n+1})$ by a regularized Runge--Kutta step (see Section~\ref{sec:rk}) with the proposed regularization parameter $\eps_n$ and the proposed stepsize $h_n$. We can use an embedded pair of Runge--Kutta methods that gives a local error estimate that should not substantially exceed $h_n\delta_n$ (else the step is rejected and repeated with a reduced stepsize).

\bigskip
\textcolor{black}{
While we do not present numerical experiments with the above algorithm in this paper, we refer to \cite{LN25}, where the results of numerical experiments are reported for a similar algorithm that adaptively determines the regularization parameter $\eps_n$ for a fixed stepsize $h$.
}

\section{Conserved quantities}
\label{sec:con}

\subsection{Conserved quantities and regularized dynamical approximation}
Let $g=(g_1,\dots,g_m)^\top :\calH\to \R^m$ be an $m$-vector of real-valued functions that are conserved along the flow of the differential equation \eqref{ivp}:
$$
g(y(t))= g(y(0)) \qquad \text{for all $t$}
$$
for every choice of initial value $y(0)\in \calH$. Differentiating this equation w.r.t.~$t$, this is seen to be equivalent to
$$
G(y(t))\dot y(t)=0,
$$
where $G=g'=\partial g/\partial y$, and hence to
$$
G(y)f(y) = 0  \qquad \text{for all $y$}.
$$
However, $g$ is no longer a conserved quantity for the regularized dynamical approximation \eqref{reg-lsq}, not even for linear $g$. We use the notation 
$$
A=\Phi',\quad M_\eps=A^\top  \! A + \eps^2 I,\quad
P_\eps =A M_\eps^{-1} \! A^\top  
$$
where we omitted the argument $\param$ for all appearing matrices.
For the regularized least squares problem \eqref{reg-lsq} we have the normal equations (now omitting the argument $t$)
\begin{equation}\label{q-ode}
M_\eps(\param)\dot \param = A(\param)^\top  f(\Phi(\param))
\end{equation}
and for $u=\Phi(\param)$, the time derivative $\dot u = \Phi'(\param)\dot \param$ becomes
\begin{equation}\label{u-eq-d}
\dot u = P_\eps(\param)f(u) = f(u) + d \quad\text{ with }\quad
d=-(I-P_\eps(\param))f(u),
\end{equation}
where the defect $d$ is bounded by $\| d \|_\calH \le \delta$ in view of \eqref{reg-lsq}.
As $G(u)f(u)=0$, we obtain
$$
\frac{d}{dt}\,g(u)= G(u)\dot u = G(u)d,
$$
which in general is different from zero, so that $g(u(t))$ is not conserved. We note, however, the bound
$$
|g(u(t))-g(u(0))| \le K \int_0^t \delta(t)\, dt, 
$$
where $K$ is an upper bound of the norm of $G(u)$ along the trajectory $u(\cdot)$.

\subsection{Enforcing conservation}
We can enforce conservation of $g$ along the approximation $u(t)=\Phi(\param(t))$ by adding the condition $G(u(t))\dot u(t)=0$ as a constraint, i.e. (omitting the argument $t$)
$$
G(\Phi(\param))A(\param)\dot \param = 0,
$$
and we minimize in \eqref{reg-lsq} under this constraint.
With the notation
$$
C(\param):=G(\Phi(\param))A(\param)=g'(u)\Phi'(\param),
$$
we obtain instead of \eqref{q-ode} the constrained system with a Lagrange multiplier $\lambda(t)\in\R^m$,
\begin{equation} \label{constrained}
    \begin{aligned}
     M_\eps(\param)&\dot \param +C(\param)^\top  \lambda = A(\param)^\top  f(\Phi(\param))\\
        C(\param) &\dot \param        \hskip 16mm             = 0.
    \end{aligned}
\end{equation}
Inserting $\dot \param$ from the first equation into the second equation, we find $\lambda$ from
the equation (omitting the argument $\param$ or $u$ of the matrices)
$$
CM_\eps^{-1}C^\top  \lambda = C M_\eps^{-1} A^\top  f(u)
$$
or equivalently, since $C=GA$ and $P_\eps=AM_\eps^{-1}A^\top$ imply $CM_\eps^{-1}C^\top=G P_\eps G^\top$ and $C M_\eps^{-1} A^\top= G P_\eps$, 
and since $(P_\eps - I) f(u)=d$ and $G(u)f(u)=0$, we find
\begin{equation}\label{lambda}
\bigl(G(u)P_\eps(\param) G(u)^\top\bigr)  \lambda = G(u)d.
\end{equation}
The symmetric positive semi-definite matrix $P_\eps=A M_\eps^{-1} A^\top $ has the eigenvalues $\lambda_i =\sigma_i^2 / (\sigma_i^2 + \eps^2)$ and 0, where
$\sigma_i$ are the singular values of $A$. Eigenvalues are very small if they correspond to very small singular values $\sigma_i\ll\eps$ of $A$, but are larger than $\tfrac12$ for $\sigma_i\ge \eps$. To understand under which condition the symmetric positive semi-definite matrix $GP_\eps G^\top $ has a moderately bounded inverse, let $\Lambda$ be the diagonal matrix of eigenvalues of $P_\eps$ and $U$ the orthogonal  matrix of eigenvectors, and for $\xi>0$  let $U_\xi$ be the matrix composed of those eigenvectors of $P_\eps$ that correspond to the eigenvalues  $\lambda_i \ge \xi$. 
If the smallest singular value of $GU_\xi$ equals $\rho >0$, then 
\begin{equation}\label{GPG-inv}
\| (GP_\eps G^\top )^{-1} \|_2 \le \frac 1{\xi \rho^2},
\end{equation}
because $v^\top  GU \Lambda (GU)^\top  v \ge v^\top  GU_\xi \Lambda_\xi (GU_\xi)^\top  v \ge \xi \|(GU_\xi)^\top  v\|_2^2 \ge \xi \rho^2 \|v\|_2^2$ for all $v\in \R^m$. 
Here, $\Lambda_\xi$ is the diagonal matrix of those eigenvalues of $P_\eps$ that are larger than~$\xi$. We remark that in the case of just one conserved quantity ($m=1$), the inverse of $GP_\eps G^\top  \in \R$ is moderately bounded if $\nabla g$ is not near-orthogonal to all those singular vectors of $A$ that correspond to singular values $\sigma_i\ge \eps$. This appears to be a very mild requirement.

Inserting $\lambda$ from \eqref{lambda} into the first equation of \eqref{constrained} and using the definition of the defect $d$  yields $\dot u = A\dot \param$ as
$$
\dot u + P_\eps G^\top  (GP_\eps G^\top )^{-1}G d = f(u) +d.
$$
With the projection onto the null-space of $G(u)$ for $u=\Phi(\param)$ that is given by
$$
\Pi(\param) = I - P_\eps(\param) G(u)^\top  (G(u)P_\eps(\param) G(u)^\top )^{-1}G(u),
$$
we thus obtain for $u(t)=\Phi(\param(t))$, instead of \eqref{u-eq-d}, the differential equation with the projected defect,
\begin{equation}\label{u-eq-Pd}
\dot u = f(u) + \Pi(\param)d.
\end{equation}

\subsection{Constrained regularized Euler method}
We now ensure the condition $g(u_{n+1})=g(u_n)$ by adding it as a constraint to the regularized least squares problem \eqref{euler-b}. 
A step of the constrained regularized Euler method, starting from $\param_n$ at time $t_n$ with the regularization parameter $\eps_n$, reads
\begin{equation}\label{euler-a-con}
\param_{n+1}=\param_n + h \dot \param_n, \qquad u_{n+1}=\Phi(\param_{n+1}),
\end{equation}
where $\dot \param_n$ is the solution of the constrained regularized linear least squares problem
\begin{equation}\label{euler-b-con}
\begin{aligned}
&\wh\delta_n^{\,2} := \| \Phi'(\param_n)\dot \param_n - f(u_n)\|_\calH^2 + \eps_n^2 \| \dot \param_n \|_\paramSpace^2 \quad\text{is minimal}
\\
&\text{subject to }\ g(u_{n+1})=g(u_n).
\end{aligned}
\end{equation}
Note that while $\delta_n$ of \eqref{euler-b} depends only on $\param_n$, the defect size $\wh\delta_n\ge \delta_n$ depends also on the stepsize $h$.
With the notation of the previous subsections, a step of the unconstrained regularized Euler method of Section~\ref{sec:time-disc} reads (with $\eps=\eps_n$)
$$
M_\eps(\param_n ) \dot \param_n  = A(\param_n )^\top  f(u_n )
$$
together with $\wt \param_{n+1} =\param_n +h\dot \param_n $ and $\wt u_{n+1} =\Phi(\param_{n+1} )$. The minimality condition for the constrained problem \eqref{euler-b-con} now determines (a different) $\dot \param_n $ together with the Lagrange multiplier $\lambda_{n+1} $ from the nonlinear system of equations
\begin{equation}\label{euler-con}
\begin{aligned}
&M_\eps(\param_n ) \dot \param_n  + C(\param_n )^\top  \lambda_{n+1}  = A(\param_n )^\top  f(u_n )\\
&g\bigl(\Phi(\param_n +h\dot \param_n )\bigr) = g(u_n ).
\end{aligned}
\end{equation}
We then set $\param_{n+1} =\param_n +h\dot \param_n $ and $u_{n+1} =\Phi(\param_{n+1} )$.
Inserting $\dot \param_n $ from the first equation into the second equation, we get a nonlinear equation for $\lambda_{n+1} $: with $\wt \param_{n+1}  = \param_n + h M_\eps(\param_n )^{-1} A(\param_n )^\top  f(u_n )$ (which is the result of the unconstrained Euler method) we have
$$
g\bigl(\Phi(\wt \param_{n+1}  - h M_\eps(\param_n )^{-1} C(\param_n )^\top  \lambda_{n+1}  )\bigr)-g(u_n )=0.
$$
A modified Newton method applied to this equation determines the $(k+1)$st iterate 
$\lambda_{n+1} ^{(k+1)}= \lambda_{n+1} ^{(k)} + \Delta \lambda_{n+1} ^{(k)}$ by solving the linear system
\begin{equation} \label{mod-newton}
\begin{aligned}
&-h C(\param_n ) M_\eps(\param_n )^{-1} C(\param_n )^\top  \Delta \lambda_{n+1} ^{(k)} 
\\
&\qquad\qquad = 
- g\bigl(\Phi(\wt \param_{n+1}  - h M_\eps(\param_n )^{-1} C(\param_n )^\top  \lambda_{n+1} ^{(k)} )\bigr)+g(u_n ).
\end{aligned}
\end{equation}
The starting value is chosen as $\lambda_{n+1} ^{(0)}=0$.
The matrix $C M_\eps^{-1} C^\top =G P_\eps G^\top$ is the same as in \eqref{lambda}. It is assumed to be invertible, see the bound~\eqref{GPG-inv} of the inverse.

\begin{theorem} \label{lem:newton}
If the matrix $G(u_n)P_\eps(\param_n)G(u_n)^\top$ has a moderately bounded inverse, then
the modified Newton iteration \eqref{mod-newton} with starting value ${\lambda_{n+1}^{(0)}=0}$ converges under the stepsize restriction
$$
h(\delta_n+h) \le c\eps
$$
with a sufficiently small $c$ that is independent of $h$, $\eps$ and $\delta$. Moreover, 
$$
\lambda_{n+1}= O(\delta_n+h).
$$
\end{theorem}

\begin{proof}
The modified Newton iteration is a fixed-point iteration for the map (omitting the argument $u_n$  of $G$ and $\param_n$ of $A,M_\eps,P_\eps$ and letting $z=h\lambda$)
$$
\varphi(z) = z -   (GP_\eps G^\top)^{-1}  \Bigl( 
g\bigl( \Phi( \wt \param_{n+1} -  M_\eps^{-1} A^\top G^\top z \bigr) - g(u_n) \Bigr).
$$
Using the $O(h(\delta_n+h))$ local error bound of the regularized Euler method as given in Lemma~\ref{lem:loc-err-euler} and the conservation of $g$ by the exact flow from $t_n$ to $t_{n+1}$,
we obtain for $\wt u_{n+1}=\Phi(\wt \param_{n+1})$ that $g(\wt u_{n+1})-g(u_n)=O(h(\delta_n+h))$ and hence $z^{(1)}= O(h(\delta_n+h))$ for the starting value $z^{(0)}=0$.
Using that $\| M_\eps^{-1} A^\top \| \le 1/(2\eps)$, we find that in a ball of radius $O(h(\delta_n+h))$ centered at $0$
we have
$$
\varphi'(z) = O\bigl(h(\delta_n+h)/\eps\bigr),
$$
which is strictly smaller than 1 under the given stepsize restriction.
The stated result then follows with the Banach fixed-point theorem.
\qed
\end{proof}

\subsection{Error analysis}
The local error has a bound similar to Lemma~\ref{lem:loc-err-euler} with the only difference that the constants now also depend on a bound of the inverse of the matrix in \eqref{lambda} and on bounds of derivatives of $g$. Note that the following local error bound is in terms of the defect size $\delta_0$
of the {\em unconstrained} regularized Euler method, as in Section~\ref{sec:time-disc}, not just of the larger $\wh\delta_0$ of the constrained method \eqref{euler-b-con}.

\begin{lemma}\label{lem:loc-err-euler-con}
Assume that the matrix $G(u_0) P_\eps(\param_0) G(u_0)^\top$ (with $\eps=\eps_0$) has an inverse bounded by 
$
\bigl\| \bigl(G(u_0) P_\eps(\param_0) G(u_0)^\top\bigr)^{-1} \bigr\|_2 \cdot \| G(u_0) \|_2^2 \le \gamma.
$
Under the stepsize restriction (cf. \eqref{euler-h})
\begin{equation}\label{euler-h-con}
h \wh \delta_0 \le c \eps_0^2
\end{equation}
with a sufficiently small $c$ (inversely proportional to $\gamma$),
we have 
$$
\wh\delta_0 \le \wh c \,\delta_0,
$$
where $\wh c$ is proportional to $\gamma$ but independent of $h$ and~$\eps_0$.
The local error of the regularized Euler method starting from $y(t_0)=u_0=\Phi(\param_0)$ is then bounded by
\begin{equation}\label{loc-err-euler-con}
\| u_{1} -y(t_1) \|_\calH \le \wh c_1 h\delta_0 + \wh c_2 h^2,
\end{equation}
where $\wh c_2$ equals $c_2$ of Lemma~\ref{lem:loc-err-euler} and $\wh c_1$  is proportional to $\gamma$, depends on a bound of the second derivative of $\Phi$ in a neighbourhood of $\param_0$ and
 on a bound of the second derivative of $g$ in a neighbourhood of $u_0$.
\end{lemma}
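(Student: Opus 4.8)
The plan is to reduce the constrained step to the unconstrained one through the Lagrange multiplier, show that this multiplier is controlled by the \emph{unconstrained} defect $\delta_0$ up to a factor proportional to $\gamma$, and then repeat the Taylor argument of \cref{lem:loc-err-euler} with the extra multiplier term treated as a harmless perturbation.

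First I would write the KKT system \eqref{euler-con} with $n=0$ and solve its first equation for $\dot q_0$. With the unconstrained increment $\dot q_0^{\rm unc}=M_{\eps_0}(q_0)^{-1}A(q_0)^\top f(u_0)$ one has $\dot q_0=\dot q_0^{\rm unc}-M_{\eps_0}(q_0)^{-1}C(q_0)^\top\lambda_1$, and using $C=GA$, the identity $\Phi'(q_0)M_{\eps_0}(q_0)^{-1}C(q_0)^\top=P_{\eps_0}(q_0)G(u_0)^\top$, and $P_{\eps_0}(q_0)f(u_0)=f(u_0)+d_0$ with the unconstrained defect $d_0$, $\|d_0\|_\calH\le\delta_0$ (cf.\ \eqref{u-eq-d}), this gives
\[
\dot u_0:=\Phi'(q_0)\dot q_0=f(u_0)+d_0-P_{\eps_0}(q_0)G(u_0)^\top\lambda_1 .
\]
Next I would Taylor-expand the constraint $g(\Phi(q_0+h\dot q_0))=g(u_0)$ about $q_0$; since $g'(u_0)\Phi'(q_0)=C(q_0)=G(u_0)\Phi'(q_0)$ and $G(u_0)f(u_0)=0$, dividing by $h$ yields the almost-linear equation for $\lambda_1$,
\[
\bigl(G(u_0)P_{\eps_0}(q_0)G(u_0)^\top\bigr)\lambda_1=G(u_0)d_0+r_0,
\]
which is \eqref{lambda} with the quadratic correction $r_0$ made explicit; $r_0$ is bounded, in terms of the bounds of $\Phi''$ and $g''$ near $u_0$, by a constant times $h\|\dot q_0\|_\calQ^2$. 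Existence of the constrained minimizer and the validity of this Lagrange characterization follow from the constraint qualification implied by invertibility of $G(u_0)P_{\eps_0}(q_0)G(u_0)^\top$, exactly as already used for \cref{lem:newton}.

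The core of the argument is then a bootstrap. From the minimality defining $\wh\delta_0$ in \eqref{euler-b-con} one has, for free, $\eps_0\|\dot q_0\|_\calQ\le\wh\delta_0$, so the stepsize restriction \eqref{euler-h-con} gives $h\|\dot q_0\|_\calQ^2\le h\wh\delta_0^2/\eps_0^2\le c\,\wh\delta_0$, hence $\|r_0\|\le Cc\,\wh\delta_0$. Using the assumed bound on $(GP_{\eps_0}G^\top)^{-1}$ (cf.\ \eqref{GPG-inv}) this yields $\|G(u_0)\|\,\|\lambda_1\|\le\gamma(\delta_0+Cc\,\wh\delta_0)$. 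On the other hand, from the displayed formula for $\dot u_0$ together with \cref{lem:P-bound} ($\|P_{\eps_0}(q_0)\|_{L(\calH)}\le1$ and $\eps_0\|\Phi'(q_0)M_{\eps_0}(q_0)^{-1}\|\le\tfrac12$) and $\eps_0\|\dot q_0^{\rm unc}\|_\calQ\le\delta_0$, one gets $\wh\delta_0\le\sqrt2\,(\delta_0+\|G(u_0)\|\,\|\lambda_1\|)$. Combining the two inequalities and choosing $c$ inversely proportional to $\gamma$ so that the $\wh\delta_0$-terms are absorbed on the left, one concludes $\wh\delta_0\le\wh c\,\delta_0$ with $\wh c$ proportional to $\gamma$, and in turn $\|G(u_0)\|\,\|\lambda_1\|=O(\gamma\delta_0)$ (so also $h\|\lambda_1\|=O(\gamma h\delta_0)$).

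Finally I would repeat the proof of \cref{lem:loc-err-euler} almost verbatim: Taylor expansion of the exact solution gives $y(t_1)-u_0=hf(u_0)+O(h^2)$ with the $O(h^2)$-constant $\tfrac12\max\|\ddot y\|_\calH=c_2$, and $u_1-u_0=\Phi(q_0+h\dot q_0)-\Phi(q_0)=\Phi'(q_0)h\dot q_0+O(\beta\|h\dot q_0\|_\calQ^2)$. Inserting $\Phi'(q_0)\dot q_0=f(u_0)+d_0-P_{\eps_0}(q_0)G(u_0)^\top\lambda_1$ and using $\|h d_0\|_\calH\le h\delta_0$, $h\|P_{\eps_0}G^\top\lambda_1\|_\calH=O(\gamma h\delta_0)$, and $\beta\|h\dot q_0\|_\calQ^2\le\beta h\wh\delta_0\cdot(h\wh\delta_0/\eps_0^2)\le\beta c\,h\wh\delta_0=O(\gamma h\delta_0)$ under \eqref{euler-h-con}, one obtains $u_1-u_0=hf(u_0)+O(h\delta_0)$ with constant proportional to $\gamma$ and depending on the bounds of $\Phi''$ and, through $\lambda_1$, of $g''$. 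Subtracting the two expansions gives \eqref{loc-err-euler-con} with $\wh c_2=c_2$ and $\wh c_1$ of the stated form. The step I expect to be the main obstacle is the bootstrap above: the multiplier $\lambda_1$ enters the quadratic remainder of the constraint through $\dot q_0$, which itself depends on $\lambda_1$ and on $\wh\delta_0$, and $\wh\delta_0$ again depends on $\lambda_1$; the whole point of the estimate is that the restriction $h\wh\delta_0\le c\eps_0^2$ with $c$ of order $1/\gamma$ is precisely what closes this loop.
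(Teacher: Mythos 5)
Your proposal is correct and follows essentially the same route as the paper's proof: express $\Phi'(q_0)\dot q_0 = f(u_0)+d_0-P_{\eps_0}(q_0)G(u_0)^\top\lambda_1$ from the KKT system, extract $\lambda_1=O(\delta_0)+O(h\|\dot q_0\|_\calQ^2)$ from the Taylor-expanded constraint, and close the bootstrap $\wh\delta_0=O(\delta_0)$ by absorbing the $\wh\delta_0$-terms under the stepsize restriction with $c\propto 1/\gamma$, before rerunning the Euler local-error expansion. The only difference is bookkeeping: you bound $\wh\delta_0$ via the single inequality $\wh\delta_0\le\sqrt2\,(\delta_0+\|G\|\,\|\lambda_1\|)$, whereas the paper bounds the two constituents $\eps_0\|\dot q_0\|_\calQ$ and $\|A(q_0)\dot q_0-f(u_0)\|_\calH$ separately by $C\delta_0+\tfrac14\wh\delta_0$ each — the same absorption argument.
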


\begin{proof}
    As in the proof of Lemma~\ref{lem:loc-err-euler}, we write $y(t_1)-y(t_0)=hf(u_0)+ O(h^2)$,
    and we have again
    $$
    u_1-u_0 = A(\param_0) h\dot \param_0 + O(h^2\|\dot \param_0\|_\paramSpace^2),
    $$
    and further
    $$
    0= g(u_1)-g(u_0) = G(u_0) (u_1-u_0) + O(h^2\|\dot \param_0\|_\paramSpace^2).
    $$
    From the first equation of \eqref{euler-con} we have
    $$
    A(\param_0)\dot \param_0 + P_\eps(\param_0) G(u_0)^\top \lambda_1 = f(u_0) + d_0,
    $$
    where $d_0 = -(I-P_\eps(\param_0))f(u_0)$ is the defect of the {\em unconstrained} regularized Euler method, which is bounded by $\delta_0$.
    From the constraint, using $G(u_0)f(u_0)=0$, we thus find
    \begin{align*}
    0 &= g(u_1)-g(u_0)= G(u_0)A(\param_0) h\dot \param_0 + O(h^2\|\dot \param_0\|_\paramSpace^2) 
    \\
    &= - h G(u_0)P_\eps(\param_0) G(u_0)^\top \lambda_1 + h G(u_0) d_0 + O(h^2\|\dot \param_0\|_\paramSpace^2),
    \end{align*}
    which yields $\lambda_1=O(\delta_0) + O(h\|\dot \param_0\|_\paramSpace^2)$ and,
    with the projection $\Pi(\param)$ appearing in \eqref{u-eq-Pd},
    $$
    u_1-u_0= h f(u_0) + h \Pi(\param_0) d_0 + O(h^2\|\dot \param_0\|_\paramSpace^2).
    $$
    Hence we obtain the error bound
    \begin{equation}\label{err-1-con}
    u_1-y(t_1)= O(h\delta_0) + O(h^2\|\dot \param_0\|_\paramSpace^2) + O(h^2).
    \end{equation}
    It remains to show that the second term on the right-hand side is also $O(h\delta_0)$ under the stepsize restriction \eqref{euler-h-con}. So far we only know from \eqref{euler-b-con} that $\eps \|\dot \param_0\|_\paramSpace \le \wh\delta_0$ and $\delta_0 \le \wh\delta_0$.
    From the first equation of \eqref{euler-con} we obtain
    $$
    \eps\dot \param_0 + \eps M_\eps(\param_0)^{-1} A(\param_0)^\top G(u_0)^\top \lambda_1 = \eps M_\eps(\param_0)^{-1} A(\param_0)^\top f(u_0) .
    $$
    We estimate
    $$
    \| \eps M_\eps(\param_0)^{-1} A(\param_0)^\top G(u_0)^\top \lambda_1 \|_\paramSpace \le \| G(u_0)^\top \lambda_1  \|_\calH
    = O(\delta_0) + O(h\|\dot \param_0\|_\paramSpace^2).
    $$
    We write $f(u_0)= A(\param_0)\dot \param_0^{\rm uncon}- d_0$, where $\dot \param_0^{\rm uncon}$ is the derivative approximation in the unconstrained regularized Euler method, which is bounded by 
    $\eps \| \dot \param_0^{\rm uncon} \|_\paramSpace \le \delta_0$.
    Using further that $\|M_\eps(\param_0)^{-1} A(\param_0)^\top A(\param_0)\|\le 1$,
    this yields the bound  
    $$
    \eps \|\dot \param_0\|_\paramSpace \le \| G(u_0)^\top \lambda_1  \|_\calH + \eps \| \dot \param_0^{\rm uncon} \|_\paramSpace + \|d_0\|_\calH = O(\delta_0) + O(h\|\dot \param_0\|_\paramSpace^2).
    $$
    Under the stepsize restriction \eqref{euler-h-con} we have 
    $$
    h\|\dot \param_0\|_\paramSpace^2 \le h(\wh\delta_0/\eps)^2\le c\wh\delta_0 \quad \text{with a small factor $c$,}
    $$ 
    so that
    $$
    \eps \|\dot \param_0\|_\paramSpace \le C \delta_0 + \tfrac14 \wh\delta_0.
    $$
    We further note that the obtained bound on $\lambda_1$ implies
    $$
    A(\param_0)\dot \param_0 -f(u_0) = - P_\eps(\param_0) G(u_0)^\top \lambda_1  + d_0 = O (\delta_0)+O(h\|\dot \param_0\|_\paramSpace^2),
    $$
    so that also
    $$
    \|A(\param_0)\dot \param_0 -f(u_0)\|_\calH \le C \delta_0 + \tfrac14 \wh\delta_0.
    $$
    Together with the estimate for $\eps \dot \param_0$ this shows that
    $$
    {\wh\delta_0}^{\,2} = \|A(\param_0)\dot \param_0 -f(u_0)\|_\calH^2 + \eps^2 \|\dot \param_0\|_\paramSpace^2 \le C'\delta_0^2 + \tfrac12 \wh\delta_0^{\,2},
    $$
    so that 
    $$
    \wh\delta_0 = O(\delta_0).
    $$
    Tracing the constants in the $O$-notation yields the stated result.
    \qed
\end{proof}

From this local error bound we again obtain a global $O(h+\delta)$ error bound as in 
Theorem~\ref{prop:glob-err-euler}, using Lady Windermere's fan with error propagation by the exact flow.

\section{Case study: Gradient systems} \label{sec:grad-sys}
In this section we study an approach to minimize a function via regularized parametric steepest descent. The resulting algorithm solves a regularized linear least squares problem for the increment in the parameters in every step and inherits the decay properties of classical (non-parametric and usually non-feasible) steepest descent up to the defect size. In contrast to standard gradient descent applied to the composition of the function with the parametrization map, which may have many local minima, the algorithm considered here is guaranteed to converge to the global minimum of a strongly convex function up to an error controlled by the defect size.

\subsection{Regularized parametric continuous-time gradient flow}
Consider the energy or loss function $V\colon\mathcal H \rightarrow \mathbb R$, with the associated unconstrained optimization problem 
\begin{equation}
 \min_{y\in\mathcal H} V(y) .    
\end{equation}
We seek approximations to the minimizer $y_*$, or generally functions that achieve a similar energy as the optimal energy $V_* = V(y_*)$. Formulating an evolution problem in the direction of the steepest energy descent yields the gradient flow, which reads
\begin{equation}\label{ivp-gradientflow}
\dot y = -\nabla V(y), \qquad y(0)=y_0.
\end{equation}
Here, the gradient $\nabla V \colon \mathcal H \rightarrow \mathcal H$ is understood to be the steepest descent direction of $V$ in the sense of the $\|\cdot\|_{\mathcal H}$-metric installed on $\mathcal H$. 

A sufficient condition on $V$ to ensure exponential decay of $V(y(t))$ to the minimal value $V_*$ as $t\to\infty$ for solutions $y(t)$ of the gradient flow is the Polyak--\L ojasiewicz inequality, that is, the existence of a constant $\mu>0$ such that  
\begin{align}\label{mu-PL}
    \tfrac{1}{2}\|\nabla V(y)\|_\calH^2 \ge \mu(V(y)-V_*)
    \quad \text{for all } y\in\calH.
\end{align}
This assumption ensures that the gradient flow and steepest descent methods reduce the loss function $V$ to its minimal value with the exponential rates, as is shown in the classical reference \cite{P63}. 

Approximating the evolution problem \eqref{ivp-gradientflow} by a parametric approach with $u(t)=\Phi(\param(t))\approx y(t)$ determines $\dot \param(t)$ by the least squares problem
\begin{equation}
	\label{reg-lsq-grad-flow}
	\delta(t)^2 := \| \dot u(t) + \nabla V(u(t)) \|_\calH^2 +\eps(t)^2 \| \dot \param(t) \|_\paramSpace^2 \quad\text{is minimal.}
\end{equation}
The parametric approximation \eqref{reg-lsq-grad-flow} of the gradient flow reduces the loss to its minimal value $V_*$ up to terms of the magnitude $\delta^2$ that are encountered along the approximation, as is shown in the following proposition. Similar results that were developed independently can be found in \cite{BCM25}.
\begin{proposition}
The parametric approximation of the gradient flow \eqref{reg-lsq-grad-flow} reduces the loss in the sense that
\begin{align*}
\dfrac{d}{dt} V(u(t))
 \le
-\tfrac{1}{2}\left\| \nabla V (u)\right\|_\calH^2 
+
\tfrac{1}{2}\delta(t)^2.
\end{align*}
Under the assumption \eqref{mu-PL}, we further have the explicit decay rates
\begin{align*}
 V(u(t)) - V_* \le e^{-\mu t}(V(u_0)-V_*)+\tfrac{1}{2}\int_{0}^te^{-\mu (t-s)}\delta(s)^2 \,\mathrm d s.
\end{align*}
\end{proposition}

\begin{proof}
The first inequality is given by using the chain rule of differentiation and the Cauchy-Schwarz inequality, which yields
\begin{align*}
\dfrac{d}{dt} V(u(t))
&=\left\langle \nabla V (u) ,\dot{u}\right\rangle_\calH
=
-\left\| \nabla V (u)\right\|_\calH^2 
+
\left\langle\nabla V (u), \dot u + \nabla V(u) \right\rangle_\calH
\\ & \le
-\tfrac{1}{2}\left\| \nabla V (u)\right\|_\calH^2 
+
\tfrac{1}{2}\left\|  \dot u + \nabla V(u) \right\|_\calH^2
 \le
-\tfrac{1}{2}\left\| \nabla V (u)\right\|_\calH^2 
+
\tfrac{1}{2}\delta(t)^2.
\end{align*}
Inserting the Polyak--\L ojasiewicz inequality \eqref{mu-PL} then yields
\begin{align*}
\dfrac{d}{dt} V(u(t))
& \le
- \mu (V(u(t))-V_*)+\tfrac{1}{2}\delta(t)^2.
\end{align*}
The statement is now the direct consequence of the same Gronwall estimate that has been used in Proposition~\ref{prop:delta}.
\qed
\end{proof}

\subsection{Regularized parametric steepest descent}
Applying the regularized parametric explicit Euler method of Section~\ref{sec:time-disc} to the gradient flow gives a parametric steepest descent method, which computes approximations $u_n=\Phi(\theta_n)$. The method defines $\dot\param_n$ via
\begin{equation}\label{euler-gradient-flow}
\delta_n^2 := \| \Phi'(\param_n)\dot \param_n + \nabla V(u_n)\|_\calH^2 + \eps_n^2 \| \dot \param_n \|_\paramSpace^2 \quad\text{is minimal,}
\end{equation}
and updates the parameters by setting
\begin{equation}
\param_{n+1} = \param_n+h\dot\param_n.
\end{equation}
To ensure the convergence of the classical (non-parametric)  steepest descent method to a minimizer, an additional property on $V$ to the Polyak--\L ojasiewicz inequality \eqref{mu-PL} is required. We assume that the second derivative of $V$ is bounded by a constant $M$, such that for all $x,y\in \calH$, we have the inequality
\begin{align}\label{eq:second-derivative-M}
V(y) \le V(x) + \left \langle \nabla V(x), y-x \right \rangle + \dfrac{M}{2} \|y-x\|_\calH^2 .
\end{align}

\begin{theorem}\label{thm:eulerGD}
 Under the step size restriction $hM \le 1/4$, we obtain for a single parametric steepest descent step
 the decay estimate
\begin{align*}
V(u_{1})- V(u_0)&\le  -\frac12 h \left\| \nabla V(u_0) \right\|^2_{\calH}+\dfrac{5h}{4}
\left(\delta_0 + \dfrac{\beta}{2}\dfrac{h\delta_0^2}{\eps_0^2} \right)^2
,
\end{align*}
 where $\beta$ is a bound of the second derivative of $\Phi$ in a neighbourhood of $\param_0$.
 Under the additional step size restriction \eqref{euler-h}, viz.,
 \begin{align}\label{step-size-restriction}
  h\delta_n \le c\,\eps_n^2,
\end{align}
we therefore obtain that the steepest descent step reduces the energy functional as long as 
$$ \|\nabla V(u_0)\|_{\calH} \ge \sqrt{\frac{c_1}{2}}\delta_0$$
with the constant 
$c_1 = \tfrac54\left(1+c\beta/2 \right)^2.$
Under the Polyak--\L ojasiewicz inequality \eqref{mu-PL} and the step size restrictions $h\mu\le 1$, $hM \le 1/4$ and \eqref{step-size-restriction}, we further have the exponential decay estimate 
\begin{align*}
V(u_{n})-V_* 
\le
(1-h\mu)^n (V(u_0)-V_*) +c_1 h
\sum_{j=1}^{n}(1-h\mu)^{j-1}\delta_{n-j}^2. 
\end{align*}
\end{theorem}
\begin{proof}

\noindent \emph{(i) Decay estimate.} Inserting $u_n= \Phi(\theta_n)$ and $u_{n+1}=\Phi(\theta_{n+1})$ into \eqref{eq:second-derivative-M} yields
\begin{align}\label{eq:second-derivative-M-un-inserted}
V(u_{n+1})-V(u_n) \le \left \langle \nabla V(u_n), u_{n+1}-u_n \right \rangle + \dfrac{M}{2} \|u_{n+1}-u_n\|_{\mathcal{H}}^2 .
\end{align}
The difference between $u_{n+1}$ and $u_n$ is characterized by the local error estimate for the explicit Euler method of Lemma~\ref{lem:loc-err-euler}, which gives
\begin{align*}
\| u_{n+1}-u_n+h\nabla V(u_n)\|_{\calH}
\le h\delta_n + \dfrac{\beta}{2}\dfrac{h^2\delta_n^2}{\eps_n^2}.
\end{align*}
Inserting this estimate for $n=0$ into the first summand of \eqref{eq:second-derivative-M-un-inserted} and applying Young's inequality ($ab\le \frac14 a^2+b^2$ for $a,b\in\R$) yields
\begin{align*}
\left\langle \nabla V(u_0), u_{1}-u_0 \right\rangle 
&\le-h \left\| \nabla V(u_0) \right\|^2_{\calH}+
\left\| \nabla V(u_0) \right\|_{\calH}
\left(h\delta_0 + \dfrac{\beta}{2}\dfrac{h^2\delta_0^2}{\eps_0^2} \right)
\\& \le
-\dfrac{3}{4} h \left\| \nabla V(u_0) \right\|^2_{\calH}+
 h
\left(\delta_0 + \dfrac{\beta}{2}\dfrac{h\delta_0^2}{\eps_0^2} \right)^2.
\end{align*}
For the second summand of the right-hand side in \eqref{eq:second-derivative-M-un-inserted}, we use the bound
\begin{align*}
&\dfrac{M}{2}\|u_{1}-u_0\|_{\mathcal{H}}^2 
\le 
M \|h\nabla V(u_0)\|_{\calH}^2 
+
Mh^2\left(\delta_0 + \dfrac{\beta}{2}\dfrac{h\delta_0^2}{\eps_0^2} \right)^2 . 
\end{align*}
Inserting both bounds on the right-hand side of \eqref{eq:second-derivative-M-un-inserted} then gives, with the step size restriction $Mh\le \tfrac{1}{4}$, the estimate
\begin{align*}
V(u_{1})-V(u_0) 
&\le \left(-\frac34 +M h\right)h \left\| \nabla V(u_n) \right\|^2_{\calH}+(h+Mh^2)
\left(\delta_0 + \dfrac{\beta}{2}\dfrac{h\delta_0^2}{\eps_0^2} \right)^2
\\&\le -\frac12 h  \left\| \nabla V(u_0) \right\|^2_{\calH}+\dfrac{5h}{4}
\left(\delta_0 + \dfrac{\beta}{2}\dfrac{h\delta_0^2}{\eps_0^2} \right)^2.
\end{align*}

\emph{(ii) Exponential decay estimate for strongly convex $V$.} Using the Polyak--\L ojasiewicz inequality on the right-hand side of the decay estimate yields, for arbitrary $n\ge 0$,
\begin{align*}
V(u_{n})-V(u_{n-1}) 
\le
-h\mu (V(u_{n-1})-V_*) +\dfrac{5h}{4}
\left(\delta_{n-1} + \dfrac{\beta}{2}\dfrac{h\delta_{n-1}^2}{\eps_{n-1}^2} \right)^2.
\end{align*}
Subtracting $V_*$ from both sides, rearranging and inserting the step size restriction at the index $n-1$ then gives
\begin{align*}
V(u_{n})-V_* 
\le
(1- h\mu ) (V(u_{n-1})-V_*) +\dfrac{5h}{4}\left(1+\dfrac{c\beta}{2} \right)^2
\delta_{n-1}^2.  
\end{align*}
Repeating this inequality on the right-hand side then gives the result.
\qed
\end{proof}




\medskip\noindent
{\it Example: Fitting a function.}
A direct application is the construction of parameters $\theta_*$ such that $\Phi(\theta_*)\approx g$, for some $g\in \mathcal H$ of interest. The natural loss function is then given by 
\begin{align*}
V_g(y) &= \tfrac12\|y-g\|_{\mathcal{H}}^2.  
\end{align*}
The minimum is attained for $y_*=g$ with $V_g(y_*) = V_g(g) = 0$.
The gradient equals $\nabla V_g(y) = y-g$ and the gradient flow thus becomes
\begin{align*}
 \dot y = - (y-g).
\end{align*}
The loss function therefore fulfills the Polyak--\L ojasiewicz inequality \eqref{mu-PL} with $\mu=1$ and its second derivative is bounded by $M=1$.

\bigskip
We close this section with a comparison of standard 
steepest descent for the parametrized loss function $V\!\circ\Phi$ and the regularized parametric steepest descent for the loss function $V$, as considered above. Let again $u_n=\Phi(\theta_n)$.

-- Standard steepest descent for $V\!\circ\Phi$ updates the parameters via
\begin{align*}
    \theta_{n+1} = \theta_n - h_n \,\Phi^\prime(\theta_n)^*\nabla V(u_n)
\end{align*}
and can easily get stuck in one of possibly many local minima of the parametrized loss function $V\!\circ \Phi$, even if $V$ is strongly convex.

-- In the regularized parametric steepest descent for $V$, we update the parameters via
\begin{align*}
    \theta_{n+1} = \theta_n - h_n \left(\Phi^\prime(\theta_n)^*\Phi^\prime(\theta_n)+ \eps_n^2 I\right)^{-1}\Phi^\prime(\theta_n)^*\nabla V(u_n),
\end{align*}
if the regularized least squares problem is solved by the normal equations.
Theorem~\ref{thm:eulerGD} shows that the regularized parametric steepest descent finds the global minimizer of a strongly convex function $V$ up to an error that depends entirely on the regularization and on approximation properties of the parametrization indicated by the defects in the regularized linear least squares problems.

\section{Case study: Schr\"odinger equation}
\label{sec:psi}

The time-dependent Schr\"odinger equation is arguably the evolution equation for which nonlinear approximations have been first and most often used, ever since Dirac's paper of 1930 \cite{Dir30}. Gaussians and tensor networks are nowadays the most prominent examples of nonlinear approximations in quantum dynamics. As a partial differential equation with an unbounded operator, the Schr\"odinger equation does not fall into the Lipschitz framework that was mostly considered so far. In this section we study what remains and what needs to be changed in the regularized approach.

\subsection{Preparation}
The Schr\"odinger equation determines the complex-valued wave function $\psi(x,t)$ that depends on spatial variables $x\in \R^d$ and time $t$:
\begin{equation}\label{tdse}
\iu  \dot \psi = -\Delta \psi + V\psi,
\end{equation}
where $\iu$ is the imaginary unit, $\dot \psi=\partial_t \psi$ is the time derivative,  $\Delta$ is the Laplacian on $\R^d$ and $V=V(x)$ is a real-valued potential that multiplies the wave function. 
The Schr\"odinger equation is considered as an evolution equation on the Hilbert space $\calH=L^2(\R^d)$ for the wave function $\psi(t)=\psi(\cdot,t)\in \calH$.
\bch 
In this section we write $\|\cdot\|= \|\cdot\|_\calH = \|\cdot\|_{L^2}$.
\ech

Consider a continuously differentiable map $\Phi$ from a parameter space $\paramSpace$ into $\calH$. We aim to approximate
$$
\psi(t)\approx u(t)=\Phi(\param(t)) \in \calH \quad\text{ for some }\ \param(t)\in\paramSpace
$$
by the regularized dynamical nonlinear approximation \eqref{reg-lsq} with the linear operator $f(u)= \iu \Delta u - \iu Vu$.
Since the Laplacian is an unbounded operator, the Lipschitz framework of the previous sections does not apply here. However, we still
have the one-sided Lipschitz condition with $\ell=0$ and from this we obtain the {\it a posteriori} error bound of Proposition~\ref{prop:delta}, with the same proof. 

To obtain an {\it a priori} error bound, we need to modify the construction of the regularized approximation $u=\Phi(\param)$. 
We will  use the property that the Laplacian maps into the tangent space:
\begin{equation}\label{Lap}
\text{If $u=\Phi(\param)$, then $\Delta u= \Phi'(\param)\param^\Delta$ for some $\param^\Delta\in \paramSpace$.}
\end{equation}
This holds true for Gaussians and tensor networks but not for neural networks. We assume \eqref{Lap} throughout this section.

\bch
\subsection{Modified regularized dynamical approximations}
\subsubsection{First modification} 
\ech
Instead of \eqref{reg-lsq}, we now choose (omitting the argument $t$) 
$\dot u =\Phi'(\param)\dot \param$ and $\dot \param\in\paramSpace$ such that
\begin{equation}\label{vp}
\dot u - \iu \Delta u = v \quad\text{and}\quad \dot \param - \iu \param^\Delta = p,
\end{equation}
where $v=\Phi'(\param)p$ with $p\in \paramSpace$ is chosen such that
\begin{equation}
\label{reg-lsq-psi}
\delta^2 := \| v + \iu Vu \|^2 +\eps^2 \| p \|_\paramSpace^2 \quad\text{is minimal.}
\end{equation}
Inserting \eqref{vp} into \eqref{reg-lsq-psi} and comparing with \eqref{reg-lsq}, we find that the term $\eps^2\|\dot \param\|^2$ in \eqref{reg-lsq} is now replaced by $\eps^2\|  \dot \param - \iu \param^\Delta \|^2$, everything else being equal.
In contrast to \eqref{reg-lsq},  the free Schr\"odinger equation (i.e., with $V=0$) is solved exactly with \eqref{vp}--\eqref{reg-lsq-psi} for every $\eps>0$. Indeed, $p=0$ provides $\dot \param = i\param^\Delta$ and $\dot u = i\Delta u$. 
We have 
$$
\partial_t (u-\psi) = \iu\Delta (u-\psi) -\iu V(u-\psi) + d  \quad\text{ with }\quad \| d \| \le \delta,
$$
and as in the proof of Proposition~\ref{prop:delta} (with $\ell=0$), we obtain the {\it a posteriori} error bound
\begin{equation}\label{err-u-psi}
\| u(t)-\psi(t) \| \le \int_0^t  \delta(s)\, ds.
\end{equation}
\bch
\subsubsection{Second modification}
With the quasi-projection  
$$P_\eps(\param) = A(\param) M_\eps(\param)^{-1} A(\param)^*,
$$ 
where $A=\Phi'$ and $M_\eps=A^* A + \eps^2 Q$ with the hermitian positive definite matrix $Q$ that defines $\|\param\|_\paramSpace^2=\param^*Q\param$,
we have that the regularized parametric approximation $u(t)$ of \eqref{reg-lsq} with the linear operator $f(u)= \iu \Delta u - \iu Vu$ satisfies
\begin{equation}
    \label{u-reg-lsq}
    \iu \dot u = P_\eps(\param) \bigl(-\Delta u + Vu\bigr),
\end{equation}
whereas $u(t)$ of \eqref{vp} satisfies
\begin{equation}
    \label{u-reg-lsq-vp}
    \iu \dot u = -\Delta u + P_\eps(\param)Vu.
\end{equation}
In neither of these two equations, the right-hand side is a self-adjoint operator acting on $u$, as would be characteristic of Schr\"odinger equations. We therefore modify \eqref{u-reg-lsq} to
\begin{equation}
    \label{u-reg-lsq-sa}
    \iu \dot u = -\Delta u + P_\eps(\param)V P_\eps(\param) u,
\end{equation}
which now has the self-adjoint parameter-dependent Hamiltonian operator $H_\eps(\param)=-\Delta + P_\eps(\param)V P_\eps(\param)$ on the right-hand side.

\ech

\begin{remark}
Multiplying a sum of complex Gaussians $u=\Phi(\param)= \sum_j \varphi(z_j)$ 
by a subquadratic potential $V$, we have
\[
Vu = \sum_j  V\varphi(z_j) = \sum_j  (U_j+W_j)\varphi(z_j),  
\]
where $U_j$ denotes the second order Taylor polynomial of $V$ centered around the position center of the 
$j$th Gaussian and $W_j$ the cubic remainder. Therefore, 
\[
\text{$Vu = \Phi'(\param)\param^U + \chi(\param)$ for some $\param^U\in\paramSpace$}
\]
and $\chi(\param)=\sum_j W_j\varphi(z_j)$ with $\|\chi(\param)\|\le \beta_3 \| (1+|x|^2) u\|$ 
for some constant $\beta_3>0$ depending on third order derivative bounds of $V$. Working with $\param^\Delta+\param^U$ 
instead of $\param^\Delta$ extends the above approximation and makes it exact for harmonic oscillators. 
\end{remark}

\subsection{A priori error bound}
We can bound the defect size $\delta(t)$ by a quantity that measures the uniform approximability of $\dot\psi-\iu\Delta\psi$ in the tangent spaces $T_{(u,\param)}\calM$ for all $u=\Phi(\param)$ in a neighbourhood of $\psi(t)$. We fix a radius $\rho>0$ and define
\begin{equation}
    \label{delta-rho-psi}
    \bar\delta_\rho(t)^2 := \sup_{\param\in\paramSpace: \| \Phi(\param)-\psi(t) \| \le \rho}\ 
    \min_{\dot \param \in \paramSpace} \Bigl( \|\Phi'(\param)\dot \param - (\dot\psi-\iu\Delta\psi) \|^2 + \eps^2 \| \dot \param \|_\paramSpace^2 \Bigr).
\end{equation}
We can bound the defect size $\delta(t)$ of \eqref{reg-lsq-psi} in terms of $\bar\delta_\rho(t)$. 
\begin{lemma}
\label{lem:delta-bound-psi}
Provided that $\| u(t)-\psi(t) \| \le \rho$, we have
$$
 \  \delta(t) \le \bar\delta_\rho(t) + \| Vu(t)-V\psi(t)) \|.
$$
\end{lemma}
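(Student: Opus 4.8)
The plan is to mimic exactly the proof of Lemma~\ref{lem:delta-bound}, tracking how the modification \eqref{vp}--\eqref{reg-lsq-psi} changes things. Omitting the argument $t$, write $u=\Phi(q)$ and $\dot u = \Phi'(q)\dot q$, and recall that $\delta^2$ in \eqref{reg-lsq-psi} equals $\|v+\iu Vu\|^2 + \eps^2\|p\|_\calQ^2$ with $v=\Phi'(q)p = \dot u - \iu\Delta u$ and $p = \dot q - \iu q^\Delta$; that is, $\delta^2 = \|\Phi'(q)\dot q - \iu\Delta u + \iu Vu\|^2 + \eps^2\|\dot q - \iu q^\Delta\|_\calQ^2$. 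The key point is that this is, up to a shift of $\dot q$ by the fixed vector $\iu q^\Delta$ (which exists by assumption \eqref{Lap}) and a shift of the target by $\iu\Delta u$, just an ordinary regularized least-squares problem of the form appearing in \eqref{delta-rho-psi}.

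First I would introduce the minimizer $p_+ = \Phi'(q)^{-1}(\text{best tangent approx})$: let $p_+ \in \calQ$ be such that $\|\Phi'(q)p_+ - (\dot\psi - \iu\Delta\psi)\|^2 + \eps^2\|p_+\|_\calQ^2$ is minimal, so that by definition \eqref{delta-rho-psi} and the hypothesis $\|u-\psi\|\le\rho$ this quantity is at most $\bar\delta_\rho^2$. Then set $\dot q_+ := p_+ + \iu q^\Delta$, which is an admissible competitor for the minimization \eqref{reg-lsq-psi}--\eqref{vp} (with $\dot u_+ = \Phi'(q)\dot q_+$, $v_+ = \Phi'(q)p_+$). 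By minimality of $\delta$,
\begin{align*}
\delta^2 &\le \|\Phi'(q)\dot q_+ - \iu\Delta u + \iu Vu\|^2 + \eps^2\|\dot q_+ - \iu q^\Delta\|_\calQ^2 \\
&= \|\Phi'(q)p_+ + \iu\Phi'(q)q^\Delta - \iu\Delta u + \iu Vu\|^2 + \eps^2\|p_+\|_\calQ^2.
\end{align*}
Now use \eqref{Lap} in the form $\Phi'(q)q^\Delta = \Delta u$, so the middle two terms in the first norm cancel and it becomes $\|\Phi'(q)p_+ + \iu Vu\|^2$. Next write $Vu = V\psi + (Vu - V\psi)$ and use that $\dot\psi = \iu\Delta\psi - \iu V\psi$, hence $\iu V\psi = \iu\Delta\psi - \dot\psi = -(\dot\psi - \iu\Delta\psi)$. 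Therefore $\Phi'(q)p_+ + \iu Vu = \bigl(\Phi'(q)p_+ - (\dot\psi - \iu\Delta\psi)\bigr) + \iu(Vu - V\psi)$, and the triangle inequality gives
\begin{align*}
\delta^2 &\le \Bigl( \|\Phi'(q)p_+ - (\dot\psi - \iu\Delta\psi)\| + \|Vu - V\psi\| \Bigr)^2 + \eps^2\|p_+\|_\calQ^2 \\
&\le \bar\delta_\rho^{\,2} + 2\bar\delta_\rho \|Vu - V\psi\| + \|Vu - V\psi\|^2 = \bigl( \bar\delta_\rho + \|Vu - V\psi\| \bigr)^2,
\end{align*}
where the second step uses $\|\Phi'(q)p_+ - (\dot\psi - \iu\Delta\psi)\|^2 + \eps^2\|p_+\|_\calQ^2 \le \bar\delta_\rho^{\,2}$ together with the elementary inequality $(a+b)^2 + c^2 \le (\sqrt{a^2+c^2} + b)^2$ for $a,b,c\ge 0$. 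Taking square roots yields the claim.

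The only real subtlety — and the step I would double-check — is the legitimacy of the substitution $\Phi'(q)q^\Delta = \Delta u$ and that $q^\Delta$ is indeed available for the specific $q$ at hand: this is exactly the standing assumption \eqref{Lap} of the section, so there is nothing to prove, but one should make sure $q^\Delta$ need not be the one of minimal norm (it need not, since it only enters through the admissible competitor $\dot q_+$, and the $\eps^2\|\dot q_+ - \iu q^\Delta\|_\calQ^2 = \eps^2\|p_+\|_\calQ^2$ term is insensitive to which $q^\Delta$ is chosen). Everything else is the verbatim algebra of Lemma~\ref{lem:delta-bound}, so I expect no genuine obstacle; the proof is essentially bookkeeping, replacing $f(y) - f(u)$ there by $-\iu(V\psi - Vu)$ here and absorbing the $\iu\Delta$ terms into the tangent-space target via \eqref{Lap}.
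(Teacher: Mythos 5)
Your proof is correct and follows essentially the same route as the paper's: introduce the minimizer $p_+$ of the tangent-space approximation to $\dot\psi-\iu\Delta\psi$, use it as a competitor in \eqref{reg-lsq-psi}, substitute $\iu V\psi=-(\dot\psi-\iu\Delta\psi)$ from the Schrödinger equation, and conclude with the triangle inequality and the elementary inequality $(a+b)^2+c^2\le(\sqrt{a^2+c^2}+b)^2$. The only difference is that you make explicit the (harmless) change of variables $\dot q_+=p_++\iu q^\Delta$, whereas the paper minimizes directly over $p$ as \eqref{reg-lsq-psi} is already formulated in that variable.
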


\begin{proof} The proof is similar to that of Lemma~\ref{lem:delta-bound}.
We omit the argument $t$ in the following. We have $u=\Phi(\param)$ and $\dot u=\Phi'(\param)\dot \param$.
Let $p_+\in \paramSpace$ be such that 
$$
\|\Phi'(\param)p_+ - (\dot\psi-\iu\Delta\psi) \|^2 + \eps^2 \| p_+ \|_\paramSpace^2 \quad\text{is minimal.}
$$
By \eqref{tdse}--\eqref{reg-lsq-psi},
\begin{align*}
\delta^2 &= \| \Phi'(\param)p + \iu Vu\|^2 + \eps^2 \| p \|_\paramSpace^2 
\\[1mm]
&\le \| \Phi'(\param)p_+ + \iu Vu\|^2 + \eps^2 \| p_+ \|_\paramSpace^2 
\\
&\le \Bigl( \|  \Phi'(\param)p_+  - (\dot\psi-\iu\Delta\psi) \| + \|- \iu V\psi +\iu Vu  \| \Bigr)^2 + \eps^2 \| p_+ \|_\paramSpace^2
\\
&\le \Bigl( \bar\delta_\rho + \| Vu -V\psi  \| \Bigr)^2,
\end{align*}
which yields the result.
\qed
\end{proof}

We construct a reference approximation $u_*=\Phi(\param_*)$ from the exact wave function $\psi$ by choosing
$\dot u_* =\Phi'(\param_*)\dot \param_*$ and $\dot \param_*\in\paramSpace$ such that
\begin{equation}\label{vp-star}
\dot u_* - \iu \Delta u_* = v_* \quad\text{and}\quad \dot \param_* - \iu \param_*^\Delta = p_*,
\end{equation}
where $v_*=\Phi'(\param_*)p_*$ with $p_*\in \paramSpace$ is chosen as a regularized best approximation to $\dot\psi-\iu\Delta\psi$ in the tangent space:
\begin{equation}
\label{reg-lsq-star-psi}
\delta_*^2 := \| v_* - (\dot\psi-\iu\Delta\psi) \|^2 +\eps^2 \| p_* \|_\paramSpace^2 \quad\text{is minimal.}
\end{equation}
With \eqref{reg-lsq-star-psi} we have 
$$
\partial_t (u_*-\psi) = \iu\Delta (u_*-\psi) + d_*  \quad\text{ with }\quad \| d_* \| \le \delta_*,
$$
and as before 
it follows that
\begin{equation}\label{err-u-star-psi}
\| u_*(t)-\psi(t) \|  \le \int_0^t  \delta_*(s)\, ds \le \int_0^t  \bar\delta_\rho(s)\, ds
\end{equation}
as long as this is bounded by $\rho$.
The error of the numerical approximation $u(t)=\Phi(\param(t))$ is bounded by a multiple of the bound in \eqref{err-u-star-psi}.
\begin{proposition}
\label{prop:delta-star-psi}
If condition \eqref{Lap} is satisfied and $\ \sup_x |V(x)| \le \nu$, then the error of $u(t)$ defined by \eqref{vp}--\eqref{reg-lsq-psi} is bounded by
$$
\| u(t)-\psi(t) \| \le e^{\nu t}\int_0^t   \bar\delta_\rho(s)\, ds
$$
as long as this is bounded by $\rho$.
\end{proposition}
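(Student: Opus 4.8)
The plan is to follow the same scheme as in the proof of Proposition~\ref{prop:delta-star}, but in the modified setting of \eqref{vp}--\eqref{reg-lsq-psi}: namely, to insert the defect estimate of Lemma~\ref{lem:delta-bound-psi} into the {\it a posteriori} bound \eqref{err-u-psi}. Recall that by construction the approximation $u=\Phi(q)$ satisfies
$$
\partial_t (u-\psi) = \iu\Delta (u-\psi) - \iu V(u-\psi) + d, \qquad \|d(t)\| \le \delta(t),
$$
and that the estimate behind \eqref{err-u-psi} comes from taking the inner product with $u-\psi$ and its real part: since $\iu\Delta$ is skew-adjoint and $V$ is real, both $\Re\langle u-\psi,\iu\Delta(u-\psi)\rangle$ and $\Re\langle u-\psi,-\iu V(u-\psi)\rangle$ vanish (this is exactly the one-sided Lipschitz condition with $\ell=0$), so that $\frac{d}{dt}\|u-\psi\| \le \delta$.

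Next I would use Lemma~\ref{lem:delta-bound-psi}: as long as $\|u(t)-\psi(t)\|\le\rho$ we have $\delta(t) \le \bar\delta_\rho(t) + \|Vu(t)-V\psi(t)\|$, and the assumption $\sup_x|V(x)|\le\nu$ gives
$$
\|Vu(t)-V\psi(t)\| = \|V(u(t)-\psi(t))\| \le \nu\,\|u(t)-\psi(t)\|.
$$
Combining these with the energy estimate yields the differential inequality $\frac{d}{dt}\|u-\psi\| \le \bar\delta_\rho(t) + \nu\|u-\psi\|$ together with $\|u(0)-\psi(0)\|=0$, and Gronwall's lemma then gives $\|u(t)-\psi(t)\| \le \int_0^t e^{\nu(t-s)}\bar\delta_\rho(s)\,ds \le e^{\nu t}\int_0^t \bar\delta_\rho(s)\,ds$, which is the claimed bound.

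The only point needing a little care is that Lemma~\ref{lem:delta-bound-psi} requires $\|u(s)-\psi(s)\|\le\rho$ on all of $[0,t]$. I would settle this by the standard continuation argument: $\|u(0)-\psi(0)\|=0$ and the error is continuous in time, so on any maximal subinterval on which it stays $\le\rho$ the chain of estimates above already forces it to be bounded by $e^{\nu t}\int_0^t\bar\delta_\rho(s)\,ds$; as long as this quantity does not reach $\rho$, the error cannot escape the $\rho$-ball, and the bound propagates to the whole interval. I do not anticipate a genuine obstacle: the substantive content is entirely contained in Lemma~\ref{lem:delta-bound-psi} and in the skew-adjointness cancellations that make $\ell=0$ available here, both of which are already in place.
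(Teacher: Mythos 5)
Your proposal is correct and follows exactly the paper's own argument: insert the defect bound of Lemma~\ref{lem:delta-bound-psi} (with $\|Vu-V\psi\|\le\nu\|u-\psi\|$) into the a posteriori estimate \eqref{err-u-psi} and apply Gronwall's lemma. The additional details you supply (the skew-adjointness cancellations giving $\ell=0$ and the continuation argument for staying in the $\rho$-ball) are exactly what the paper leaves implicit.
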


\begin{proof}
The bound follows from Lemma~\ref{lem:delta-bound-psi} inserted into \eqref{err-u-psi} and using the Gronwall lemma.
\qed
\end{proof}

\subsection{Energy and norm conservation}
With the Hamiltonian $H=-\Delta + V$, which is a self-adjoint linear operator on $\calH=L^2(\R^d)$ with domain $D(H)=H^2(\R^d)$, the total energy is defined as $\langle u,Hu \rangle_\calH$ for $u\in D(H)$. The energy is conserved along the exact wave function $\psi(t)\in D(H)$ of \eqref{tdse}, since (omitting in the following the argument $t$ and the subscript $\calH$ in the inner product)
$$
\frac{d}{dt}\langle \psi,H\psi \rangle = 2\,\Re\langle  H\psi, \dot \psi \rangle = 2\,\Re\langle  H\psi, -\iu H\psi \rangle =0.
$$
We show that the energy is also conserved along the regularized approximation \eqref{reg-lsq}. 
Since both $H$ and $P_\eps(\param)$ are self-adjoint,
we have for the method \eqref{reg-lsq} or equivalently \eqref{u-reg-lsq}
(omitting the arguments 
$\param(t)$ and $t$)
$$
\frac{d}{dt}\langle u,Hu \rangle = 2\,\Re\langle  Hu, \dot u \rangle = 2\,\Re\langle  Hu, -\iu P_\eps Hu \rangle =0.
$$
However, the modified approximation $u$ defined by \eqref{vp}--\eqref{reg-lsq-psi} or equivalently \eqref{u-reg-lsq-vp}
has
\begin{align*}
\frac{d}{dt}\langle u,Hu \rangle &= 2\,\Re\langle  Hu, \dot u \rangle 
= 2\,\Re\langle  - \Delta u + Vu, \iu \Delta u - \iu P_\eps Vu \rangle 
\\[-1mm]
&= 2\,\Re\langle   \Delta u, \iu P_\eps Vu \rangle + 2\,\Re\langle  Vu , \iu \Delta u \rangle 
\\
&= 2\,\Re\langle   \Delta u,  \iu (I-P_\eps)V u  \rangle,
\end{align*}
which in general is nonzero.

\bch 
The $L^2$-norm $\|u(t)\|$ is preserved only by the second modified approximation \eqref{u-reg-lsq-sa}: since 
$P_\eps V P_\eps$ is self-adjoint, we have
$$
\frac{d}{dt}\| u \|^2 = 2 \,\Re \langle u, \dot u \rangle = 2\, \Im \langle u, -\Delta u+ P_\eps V P_\eps u \rangle =0.
$$
However, this method also does not preserve energy.
Norm and energy conservation can be enforced simultaneously as described in Section~\ref{sec:con}.

\subsection{Time discretization by regularized parametric Strang splitting}
\subsubsection{Abstract Strang splitting}
Splitting methods that separate the kinetic and potential terms in the Schr\"odinger equation \eqref{tdse} are standard methods for approximately propagating the wave function $\psi(t)$. The Strang splitting approximates $\psi(t_n)\approx \psi_n$ with step size $h$ by
\begin{equation}\label{strang}
\psi_{n+1}= \exp\bigl(\tfrac12 h \, \iu \Delta \bigr)\, \exp \bigl( -h \, \iu V \bigr) \, \exp\bigl(\tfrac12 h \, \iu \Delta \bigr)
\psi_n.
\end{equation}
It was shown in \cite{JL00} that in the case of a bounded twice continuously differentiable potential $V$ and a sufficiently regular wave function $\psi$,
\begin{equation}
    \label{strang-err}
    \| \psi_n - \psi(t_n) \|_{L^2} \le c \,t_n\, h^2 \max_{0\le t \le t_n} \| \psi(t) \|_{H^2},
\end{equation}
where $H^2$ is the second-order $L^2$-based Sobolev space.

\subsubsection{Regularized parametric Strang splitting}

\hskip 6mm
(i) Starting from $u_0=\Phi(\param_0)$, we first compute 
$$
u_0^+ = \Phi(\param_0^+) = \exp\bigl(\tfrac12 h \, \iu \Delta \bigr)u_0,
$$ 
which is feasible under the assumption~\eqref{Lap}. For Gaussians, simple explicit formulas related to classical mechanics are known to compute the parameters $\param_0^+$; see e.g. \cite[Prop.\,3.18]{LL20}.

(ii) We approximate $u_1^-= \Phi(\param_1^-) \approx \exp \bigl( -h \, \iu V \bigr) u_0^+$, where
$\param_1^-$ is the result of one step of a Runge--Kutta method (of order $p\ge 2$) applied to the differential equation for $\param(t)$ that results from determining $\dot \param(t)$ by the regularized linear least squares problem (omitting the argument $t$)
\begin{equation}
\label{reg-lsq-V}
\delta^2 := \| \iu \Phi'(\param) \dot \param - V \Phi(\param) \| ^2 + \eps^2 \| \dot \param \|_\paramSpace ^2 = \min!
\end{equation}
We then set $u_1^-=\Phi(\param_1^-)$.

(iii) As in step (i), we compute
$$
u_1 = \Phi(\param_1) = \exp\bigl(\tfrac12 h \, \iu \Delta \bigr)u_1^-.
$$ 
This scheme is repeated to compute consecutively $u_2$, $u_3$, etc.

\subsubsection{Error bound}
We prove an error bound over bounded time intervals by the following steps:
\begin{enumerate}
    \item We bound the difference of the results $\psi_1$ and $u_1$ obtained after one step of the abstract (non-parametrized) Strang splitting and the regularized parametrized Strang splitting, respectively, starting from the same initial value $\psi_0=u_0=\Phi(\param_0)$.
    \item Using Lady Windermere's fan with error propagation by the stable abstract (non-parametrized) Strang splitting,
    we bound $u_n-\psi_n$.
    \item The bound for the error $u_n - \psi(t_n)= (u_n-\psi_n) + (\psi_n - \psi(t_n))$ then follows from the known error bound \eqref{strang-err} of the abstract Strang splitting.
\end{enumerate}
Concerning item 1., we have the following bound.

\begin{lemma}
    \label{lem:psi-err-local}
    Assume that the potential $V$ is bounded by $\nu$. Under the stepsize restriction \eqref{rk-h}, we then have 
    $$
    \| u_1 - \psi_1 \|_{L^2} \le c_1 \,h\delta_0 + c_2 \,h^3,
    $$
    where $\delta_0=\max_{i=1,\ldots,s} \delta_{0,i}$ with the defect sizes $\delta_{0,i}$ of the regularized Runge--Kutta method used in substep (ii) above; see \eqref{rk-b} with $f(u)=-\iu Vu$. Moreover, $c_1$ and $c_2$ are independent of $\delta_0$ and $h$, and $c_1$ depends linearly on a bound of the second derivative of $\Phi$, and 
    $c_2$ is proportional to $\nu^3$.
\end{lemma}

\begin{proof}
We have
$$
u_1 - \psi_1 = \exp\bigl(\tfrac12 h \, \iu \Delta \bigr) 
\bigl(u_1^- - \exp \bigl( -h \, \iu V \bigr) u_0^+ \bigr)
$$
and since Lemma~\ref{lem:loc-err-rk} shows that for a Runge--Kutta method of order at least 2 we have the local error bound
$$
\| u_1^- - \exp \bigl( -h \, \iu V \bigr) u_0^+ \| = O(h\delta_0 + h^3),
$$
the result follows.
\qed
\end{proof}
Concerning item 2., let us write the abstract Strang splitting as $\psi_{n+1} = S_h \psi_n$, where $S_h$ is the composition of the three exponentials appearing in \eqref{strang}. Since each of the three exponentials is unitary, so is $S_h$ and hence also $S_h^n$ for every $n$. This implies that any two Strang splitting sequences $\psi_n=S_h^n \psi_0$ and $\widetilde \psi_n=S_h^n\widetilde \psi_0$ with starting values $\psi_0$ and $\widetilde \psi_0$ satisfy
$$
\| \psi_n - \wt \psi_n \| \le \| \psi_0 - \wt \psi_0 \|.
$$
Via Lady Windermere's fan with propagation of the local errors by $S_h$, we therefore obtain from Lemma~\ref{lem:psi-err-local} that
$$
\| u_n - \psi_n \| \le \sum_{j=1}^n\| u_j - S_h u_{j-1} \| \le nh (c_1\delta + c_2h^2) ,
$$
where $\delta$ is the maximal defect size appearing in the regularized least squares problems~\eqref{reg-lsq-V} at the Runge--Kutta stages.
Combining this bound with the error bound \eqref{strang-err} of the abstract Strang splitting, we arrive at the following global error bound.

\begin{theorem}
    Assume that the potential $V$ is bounded and twice continuously differentiable with bounded derivatives, and assume that the wave function $\psi(t)$ of \eqref{tdse} with initial value
    $\psi_0=u_0=\Phi(\param_0)$ has a bounded $H^2$-norm for $0\le t \le \bar t$. 
    Under the stepsize restriction \eqref{rk-h}, the error of the regularized parametrized Strang splitting, described in (i)--(iii) above, is bounded by
    $$
    \| u_n - \psi(t_n) \|_{L^2} \le t_n \bigl(C_1 \delta + C_2 h^2\bigr),
    $$
    where $C_1$ and $C_2$ are independent of $n$ and $h$ and the wave function $\psi$, and where $\delta$ is the maximal defect size appearing in the regularized least squares problems~\eqref{reg-lsq-V} at the Runge--Kutta stages.
    $C_1$ depends on a bound of the second derivative of $\Phi$, and $C_2$ depends on bounds of $V$ and its first two derivatives and on a bound of the $H^2$-norm of $\psi(t)$ on the considered time interval.
    \qed
\end{theorem}

\ech

\section{Numerical experiments}
\label{sec:num}

We close the paper with a collection of numerical experiments, both for the approximation of flow maps for ODEs and the Schrödinger equation.
\subsection{Approximating the flow map of a Lotka--Volterra model}
As a simple nonlinear initial value problem, we consider the classical predator--prey model
\begin{equation}\label{lv}
\begin{aligned}
\dot{x} &= \alpha x - \beta x y.
\\
\dot{y} &= \delta xy - \gamma y,
\end{aligned}
\end{equation}
where $\alpha,\beta,\gamma,\delta$ are positive constants, which are all set to 1 in our experiments.
The region of interest in our experiments is the square $D = \left[\tfrac{1}{2},\frac{5}{2}\right]^2$. 

The flow map $\varphi_t:D\to \R^2$ at time $t$, which to every initial value $(x_0,y_0)\in D$ associates the corresponding solution value $(x(t),y(t))$ of \eqref{lv}, is considered as an element of 
the Hilbert space $\calH = L^2(D)^2$. It satisfies the differential equation on $\calH$
$$
\frac d{dt}\varphi_t = f (\varphi_t), \qquad \varphi_0= \text{Id},
$$
where $f$ is given by the right-hand side of \eqref{lv} and $\text{Id}$ is the identity on~$\calH$.

We use the Tensorflow library to approximate the flow map $\varphi_t$ by a small feedforward neural network with three fully connected hidden layers, each with a depth of four neurons.  Overall, the network architecture requires only $62$ parameters. \textcolor{black}{For a smooth parametrization $\Phi(\param)$,} we use the sigmoid function
\begin{align*}
\sigma(x) = \frac{e^x}{1+e^x} 
\end{align*}
as the  activation function on each layer. (We note that the popular Relu function $\max(x,0)$ is not covered by our theory, which requires a twice continuously differentiable parametrization.)
The metric installed on the weight space $\mathcal Q$ is the standard euclidian norm.

As the initial nonlinear parametrization at $t=0$, we require a network that approximates the identity on $D$ with a high accuracy. For our experiments, this was realized by pretraining an initial approximation $u_0 \colon D\rightarrow \mathbb R^2$ with a standard optimizer and then applying the regularized  procedure \eqref{reg-lsq} to the initial value-problem
$$\dot{u}(\cdot,t) =\text{Id} - u_0$$
with time-independent right-hand side.
At $t = 1$, we then obtain an approximation $u(\cdot,1) \approx \text{Id}$. For the presented experiments, we used the classical Runge--Kutta method of order $4$ with the regularization parameter $\varepsilon=10^{-6}$ and $N=2000$ time steps.

\begin{remark}[Connection to the Gau\ss--Newton method] We note that the approach above can be used to construct neural networks (or any nonlinear parameterization) approximating any given arbitrary function $g$, instead of the identity $\text{Id}$. The implementation of such methods generalizes the Gau\ss--Newton method applied to $\|\Phi(\param) - g\|=\min$, which would be obtained by applying the Euler method (with $\varepsilon=0$ and $h=1$) to the differential equation that results from the (non-regularized) least squares problem
$
\| \Phi'(\param)\dot \param - (g- \Phi(\param))\|= \min
$.
\end{remark} 

The assembly of the Jacobian $\Phi'(\param_{n,i})$, for the given parameters at the internal stages $\param_{n,i}$ of the Runge--Kutta method, is efficiently realized by the automatic differentiation routines provided by the Tensorflow framework. For the numerical quadrature, we choose a composite Gaussian quadrature with $4$ nodes on each subinterval and $10$ subintervals in each direction. 

We now apply the classical Runge--Kutta method of order $4$ and observe the error behaviour for varying step size $h$ and regularization parameter $\varepsilon$.

In Figure~\ref{fig:LV-time-conv}, we fix several values of the regularization parameter $\varepsilon$ and vary the time step size to observe the time convergence behaviour. The $\mathcal H$-norm (i.e. the $L^2$-norm) on $D$ is the natural error measure, which is taken at the fixed time $t=1$. As predicted by the theory, we observe a step size restriction depending on the parameter $\varepsilon$. For smaller values of $\varepsilon$, we require a smaller time step size $h$ in order to achieve convergence. The observed time step restriction is, however, milder than the restriction in Theorem~\ref{prop:glob-err-rk}. On the right-hand side, we visualize the a posteriori bounds for the projections. As expected, these bounds are quite stable with respect to the time step size and estimate the possible accuracy for a fixed parameter $\varepsilon$ and the underlying nonlinear approximation.

In Figure~\ref{fig:LV-eps-conv}, we conversely fix the number of time steps and observe the error and the projection errors for a varying regularization parameter $\varepsilon$. Overall, we observe a convergence of the order of $\mathcal O(\varepsilon)$, when the number of time steps is sufficiently large. When the number of time steps is not sufficiently large, we again observe the effect of the time step restriction. The a posteriori terms of the error bound capture the effects of the regularization parameter $\varepsilon$, but are by construction almost invariant with respect to the time discretization.

\begin{figure}
\includegraphics[width=1.\textwidth]{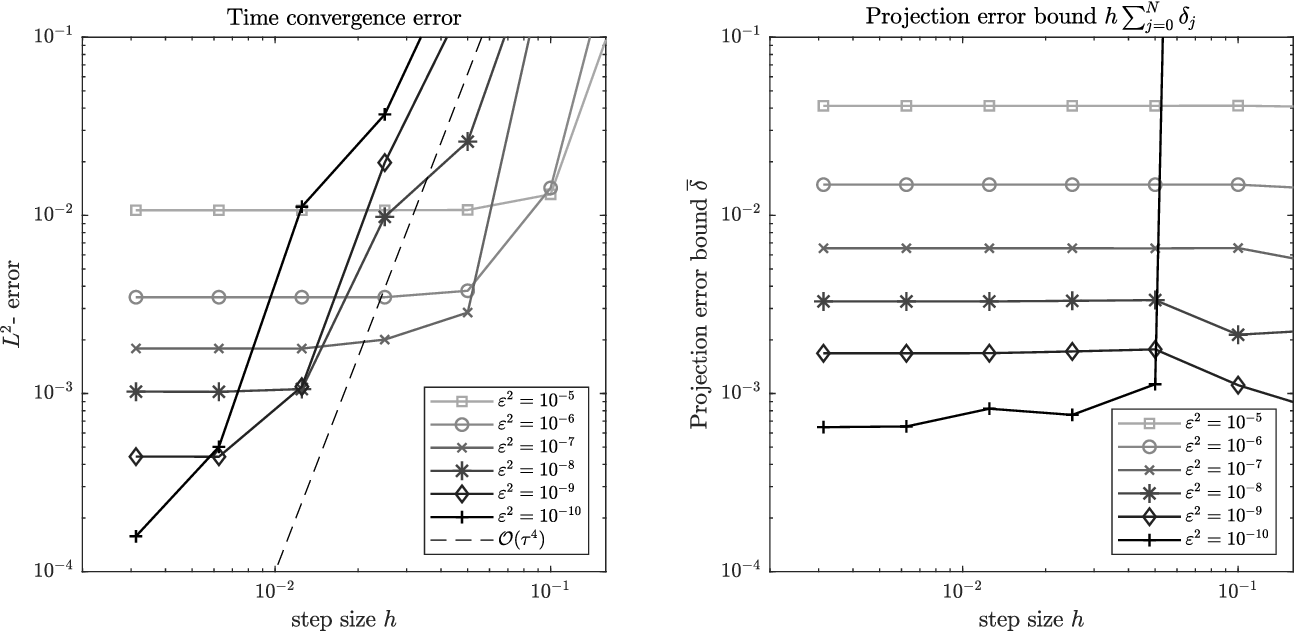}
\caption{Time convergence plot for the Lotka--Volterra system, computed with a fixed neural network architecture with  three hidden layers and four neurons each, which is fully described by $\param\in \mathbb R^{62}$.  We fix the regularization parameter $\varepsilon$ and observe the error behaviour of the classical Runge--Kutta approximation to the regularized flow \eqref{reg-lsq}. On the right-hand side, we plot the projection error term of the error bound described in Theorem~\ref{prop:glob-err-rk}. }
\label{fig:LV-time-conv}
\end{figure}

\begin{figure}
\includegraphics[width=1.\textwidth]{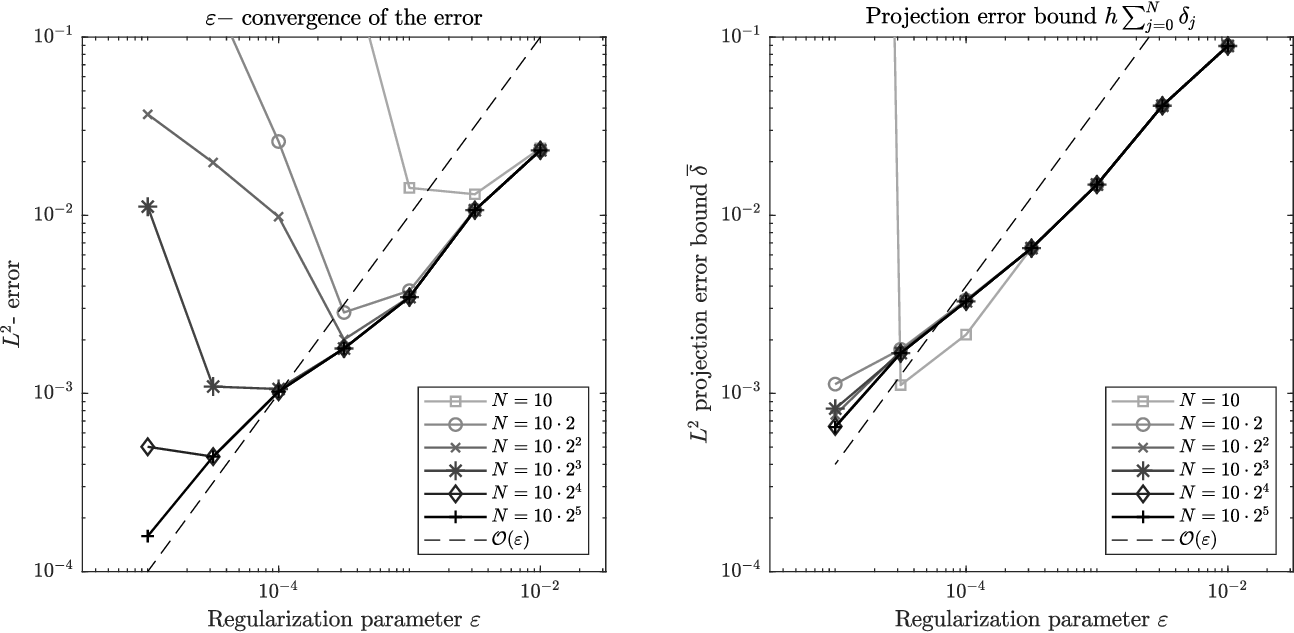}
\caption{The $\varepsilon-$ convergence of the same network architecture, with the same time discretization. We fix the number of time steps and vary the regularization parameter.}
\label{fig:LV-eps-conv}
\end{figure}

\subsection{Approximating double-well quantum dynamics}
We consider a one-dimensional Schr\"odinger equation
$\iu \dot\psi = H\psi$ formulated within the setting of the complex Hilbert space $\calH= L^2(\R,\C)$. 
The equation serves as a model for tunneling dynamics. 
The Schr\"odinger operator
\[
H = -\frac12 \partial_x^2 + \alpha_2 x^2 + \alpha_4 x^4,\quad \psi_0(x) = \pi^{-1/4} \e^{-(x-\param_{\ell})^2/2}, 
\]
contains a quartic double-well potential with polynomial parameters $\alpha_2 = -\frac18$ and $\alpha_4 = \alpha_2^2$.
The initial condition $\psi_0$ is a single normalized Gaussian, whose width stems from the standard harmonic oscillator $H_0 = -\frac12\partial_x^2 + \frac12 x^2$, placed at the left minimum $\param_{\ell} = -2$ of the double well potential. 
During the time interval $[0,T] = [0,12]$ the wave packet travels from the left to the right well, see also \cite{Joubert24}. The approximation ansatz is a frozen sum of $M=36$ Gaussians,
\[
u(t,x) = \sum_{m=1}^M c_m(t) \e^{-x^2/2-\kappa_m(t)x},
\]
with $2M$ complex parameters $\left(c_m(t),\kappa_m(t)\right)\in\paramSpace=\C^2$. For the initialization 
$u_0\in\calM$, we put a non-uniform grid of Gau\ss --Hermite quadrature nodes $(x_i,\xi_j)$ with origin at $(\param_\ell,0)$
on $\R^2$, reformulate the corresponding Gaussian wave packets $\e^{-(x-x_i)^2/2 + \iu \xi_j(x-x_i)}$ in the complex algebraic format $\e^{-x^2/2 - \kappa_m(0)x}$, and determine the optimal linear expansion coefficients 
$c_m(0)$ by solving the linear least squares problem
\[
\left\|u(0) -\psi_0\right\|_\calH  = \min !
\]
The matrices for the initial minimization and the ones involving the parameter Jacobian $\Phi'(\param)$ are evaluated via analytical formulas for Gaussian integrals of the type
\[
\int_\R x^k \e^{-\beta x^2-\lambda x}\,\mathrm{d}x,\qquad \beta>0,\quad \lambda\in\C.
\]
The time integrator is the classical Runge--Kutta scheme of order four. As before, we observe
the error behaviour for varying step size $h$ and regularization parameter $\eps$.

In Figure~\ref{fig:DW-time-conv}, we fix several values of the regularization parameter $\varepsilon$ and vary the time step size $h$ to observe the time convergence behaviour. Since energy
$\mathcal E(\psi(t)) = \langle \psi(t),H\psi(t)\rangle_\calH$, $t\in\R$, is a conserved quantity of the Schr\"odinger evolution, we evaluate 
the approximate energy $\mathcal E(u(t))$ at the final time $t=T$ and compare with the value at initial time $t=0$, see the left-hand side. We observe, that the simulations 
with the smallest regularization parameter $\eps=10^{-5}$ have large errors and even prematurely terminate for 
the particular step size $h=4\cdot 10^{-3}$. For the other choices of the regularization parameter, 
the errors follow the order of the time integrator without the predicted step size restriction. On the right-hand side, we show the a posteriori error bounds for the projections. As before for the Lotka--Volterra model, these bounds are stable with respect to the time step size and estimate the accuracy of the underlying approximation. 

In Figure~\ref{fig:DW-eps-conv}, we conversely fix the size of the time step $h$ and present errors for a varying regularization parameter~$\varepsilon$. The Schr\"odinger dynamics are unitary, and thus conserve the norm $\|\psi(t)\|_\calH$, $t\in\R$, of the solution (mass conservation). On the left hand-side 
we compare the approximate norm $\|u(T)\|_\calH$ at the final time $T$ with the exact unit value. 
We observe decay of the mass error only for relatively large values of the regularization parameter. Depending on the time step size, less regularization results in larger errors. The right-hand side shows 
the a posteriori terms of the error bound. Again, they capture the effects of the regularization parameter $\varepsilon$, but are by construction insensitive with respect to the time discretization.

\begin{figure}
\includegraphics[width=0.49\textwidth]{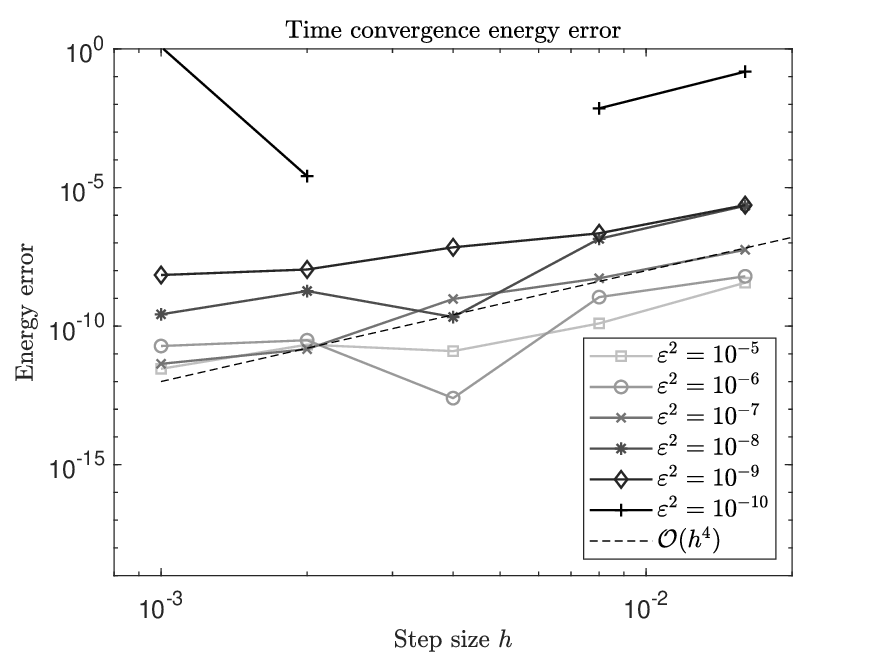}
\includegraphics[width=0.49\textwidth]{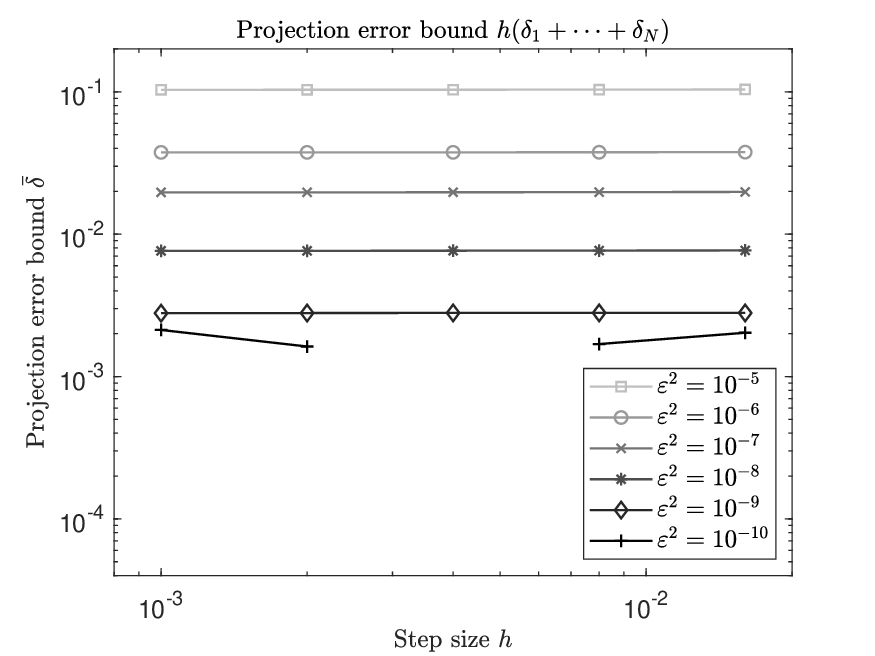}
\caption{Time convergence plot for the double-well system, computed with a sum of $M=36$ complex Gaussians, 
which is fully described by $\param\in \mathbb C^{72}$.  We fix the regularization parameter~$\varepsilon$ and observe the error behaviour of the classical Runge--Kutta approximation to the regularized flow \eqref{reg-lsq}. On the left-hand side, we plot the energy error $\left|\mathcal E(u(T))-\mathcal E(u(0)) \right|$, on the right-hand side the projection error term of the error bound described in Theorem~\ref{prop:glob-err-rk}. }
\label{fig:DW-time-conv}
\end{figure}

\begin{figure}
\includegraphics[width=0.49\textwidth]{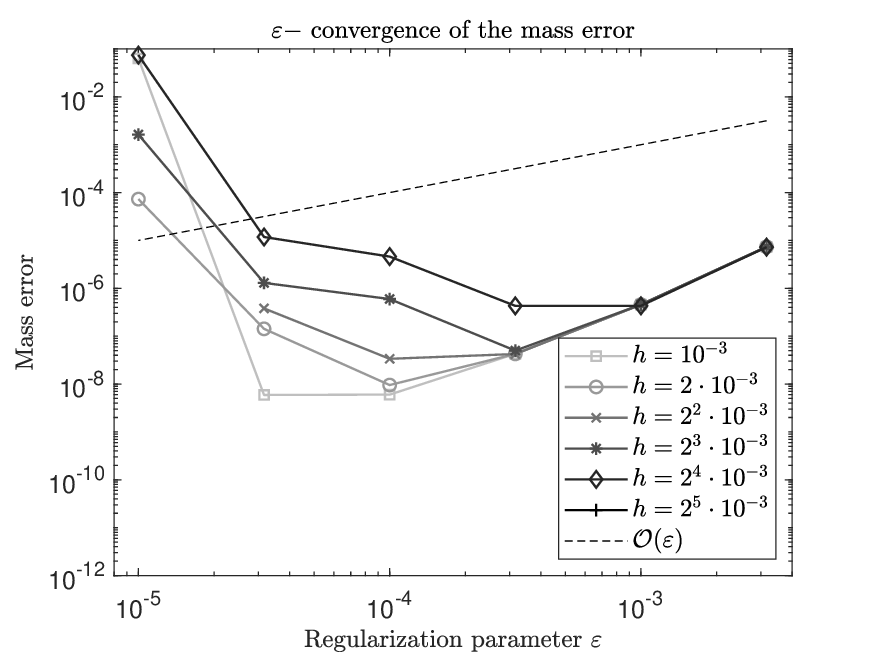}
\includegraphics[width=0.49\textwidth]{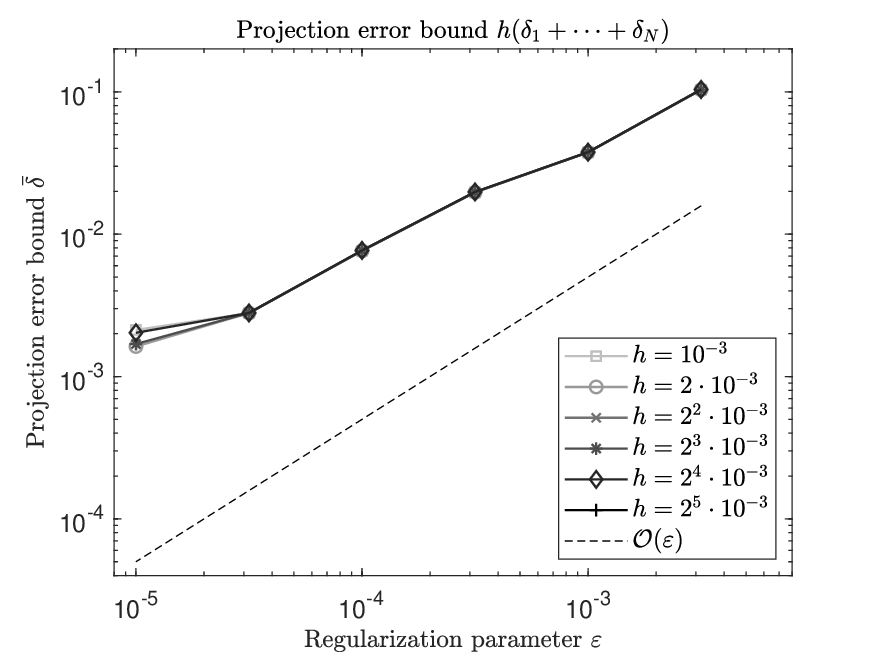}
\caption{The $\varepsilon-$convergence of the same sum of Gaussians, with the same time discretization. We fix the number of time steps and vary the regularization parameter. On the left-hand side, we plot the norm error 
$|\|u(T)\|^2_\calH -1|$, on the right-hand side the projection error.}
\label{fig:DW-eps-conv}
\end{figure}

 	\begin{acknowledgements} 
This work was funded by the Deutsche Forschungsgemeinschaft (DFG, German Research Foundation) under projects SFB 1173 – Project-ID 258734477 and TRR 352 – Project-ID 470903074 as well as the Austrian Science Fund (FWF) under
the special research program Taming complexity in PDE systems (grant SFB F65).
	\end{acknowledgements}

\bibliographystyle{abbrv}
\bibliography{Lit.bib}


\appendix

\section{Appendix: Note on the regularized least squares problem}

To study the sensitivity with respect to the regularization parameter, we consider the linear least squares problem to find $x=x(\alpha) $ such that
$$
\vartheta(\alpha)  := \| Ax-b \|^2 + \alpha \| x \|^2 \quad\text{ is minimal},
$$
where we take the Euclidean norms.
Note that $\vartheta=\delta^2$ and $\alpha= \eps^2$ in the setting of the paper.
The dependence of $\vartheta$ on $\alpha$ is remarkably simple.

\begin{lemma}\label{lem:reg-lsq}
We have 
$$\vartheta'(\alpha) = \| x(\alpha)  \|^2. 
$$
Moreover, 
$$
\vartheta''(\alpha) =\frac{d}{d\alpha} \| x(\alpha)  \|^2\le 0 \quad\text{ and } \quad 
\frac{d}{d\alpha} \, \frac{\vartheta(\alpha) }{\alpha}  \le 0.
$$
\end{lemma}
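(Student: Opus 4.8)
The plan is to exploit the fact that $\vartheta(\alpha)$ is a parametrized minimum, $\vartheta(\alpha) = \min_{x} g(x,\alpha)$ with $g(x,\alpha) := \| Ax-b \|^2 + \alpha \| x \|^2$, and to combine the envelope theorem with elementary concavity and monotonicity properties of the family $g(\cdot,\alpha)$.

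First I would record that for every $\alpha>0$ the minimizer is unique and given by the normal equations, $x(\alpha) = (A^\top A + \alpha I)^{-1} A^\top b$, which is a rational — hence $C^\infty$ — function of $\alpha$ on $(0,\infty)$; consequently $\vartheta$ is smooth there. Then the envelope theorem, or equivalently a direct computation differentiating $\alpha \mapsto g(x(\alpha),\alpha)$ and using $\nabla_x g(x(\alpha),\alpha)=0$, gives $\vartheta'(\alpha) = \partial_\alpha g(x,\alpha)\big|_{x=x(\alpha)} = \| x(\alpha) \|^2$, which is the first claim.

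For the second derivative I would observe that for each fixed $x$ the map $\alpha \mapsto g(x,\alpha) = \| Ax-b \|^2 + \alpha \| x \|^2$ is affine, hence concave, in $\alpha$; a pointwise infimum of concave functions is concave, so $\vartheta$ is concave on $(0,\infty)$, and being smooth this yields $\vartheta''(\alpha) = \tfrac{d}{d\alpha}\| x(\alpha) \|^2 \le 0$. For the last inequality I would write $\vartheta(\alpha)/\alpha = \min_x\big( \alpha^{-1}\| Ax-b \|^2 + \| x \|^2 \big)$; for each fixed $x$ the function $\alpha \mapsto \alpha^{-1}\| Ax-b \|^2 + \| x \|^2$ is non-increasing on $(0,\infty)$, since its derivative is $-\alpha^{-2}\| Ax-b \|^2 \le 0$, and a pointwise infimum of non-increasing functions is non-increasing, so $\tfrac{d}{d\alpha}\big(\vartheta(\alpha)/\alpha\big)\le 0$.

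There is essentially no hard step here; the only point needing a little care is justifying differentiability of $\vartheta$ so that the infimum-of-concave and infimum-of-monotone arguments translate into statements about $\vartheta'$ and $\vartheta''$, and this is immediate from the explicit rational formula for $x(\alpha)$. Alternatively one can bypass all of this by diagonalizing with the SVD $A = U\Sigma V^\top$ and setting $c = U^\top b$: a short computation gives $\vartheta(\alpha) = \sum_i \alpha c_i^2/(\sigma_i^2+\alpha)$, whence $\vartheta'(\alpha) = \sum_i \sigma_i^2 c_i^2/(\sigma_i^2+\alpha)^2 = \| x(\alpha) \|^2$, $\vartheta''(\alpha) = -2\sum_i \sigma_i^2 c_i^2/(\sigma_i^2+\alpha)^3 \le 0$, and $\vartheta(\alpha)/\alpha = \sum_i c_i^2/(\sigma_i^2+\alpha)$ with derivative $-\sum_i c_i^2/(\sigma_i^2+\alpha)^2 \le 0$, giving all three assertions at once.
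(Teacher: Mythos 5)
Your proof is correct, and for two of the three claims it takes a genuinely different route from the paper. For $\vartheta'(\alpha)=\|x(\alpha)\|^2$ your envelope-theorem argument is really the same computation as the paper's, just packaged more cleanly: the paper writes out $\vartheta' = 2\langle Ax-b,Ax'\rangle + 2\alpha\langle x,x'\rangle + \|x\|^2$ and verifies by hand that the first two terms cancel, which is precisely the statement $\langle \nabla_x g(x(\alpha),\alpha), x'(\alpha)\rangle = 0$ that the envelope theorem encapsulates; both need the differentiability of $x(\alpha)$ that you justify via the rational formula $x(\alpha)=(A^\top A+\alpha I)^{-1}A^\top b$. For the two inequalities, however, the paper proceeds by explicit differentiation using $x'=-M^{-1}x$, obtaining the exact identities $\vartheta''=-2\langle x,M^{-1}x\rangle$ and $(\vartheta/\alpha)'=-\|Ax-b\|^2/\alpha^2$, whereas you derive only the signs from structural facts: $\vartheta$ is an infimum of functions affine in $\alpha$, hence concave, and $\vartheta/\alpha$ is an infimum of functions non-increasing in $\alpha$, hence non-increasing. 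Your route is more conceptual and would survive in settings where the minimizer is not available in closed form (e.g.\ constrained or non-quadratic regularized problems), at the price of losing the explicit formulas for $\vartheta''$ and $(\vartheta/\alpha)'$ that the paper's computation yields as a by-product. Your SVD alternative, $\vartheta(\alpha)=\sum_i \alpha c_i^2/(\sigma_i^2+\alpha)$ with $c=U^\top b$, is also correct (components of $b$ orthogonal to the range of $A$ contribute constants) and gives all three assertions by inspection; it is the most economical of the three arguments but is tied to the Euclidean/quadratic structure.
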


\begin{proof} With $M=M(\alpha) =(A^\top A + \alpha I)$ we have the normal equations $Mx = A^\top  b$ and hence
$$ 
x=M^{-1}A^\top b.
$$
Since $M' =I$, we have $Mx'+x =0$ and hence
$$
x' = - M^{-1}x.
$$
We have
$$
\vartheta' = 2\langle Ax-b, Ax' \rangle  + 2\alpha \langle x,x' \rangle + \| x \|^2,
$$
which becomes
\begin{align*}
 &\langle Ax-b, Ax' \rangle = \langle Ax-b,  -AM^{-1}x\rangle 
 = -\langle x, A^\top  A M^{-1} x \rangle + \langle M^{-1}A^\top b,x \rangle 
 \\
 &= -\langle x, A^\top  A M^{-1} x \rangle + \langle x,x \rangle
 = \langle x,(I - A^\top  A M^{-1})x \rangle = \langle x, \alpha M^{-1} x \rangle,
\end{align*}
and 
$$
 \alpha \langle x,x' \rangle = \langle x, - \alpha M^{-1} x \rangle, 
$$
so that
$$
\vartheta' = 
2\langle x,\alpha M^{-1}x\rangle + 2\langle x, -\alpha M^{-1}x \rangle +  \| x \|^2 =
 \|x\|^2,
$$
which is the stated result for the first derivative. The  second derivative is
$$
\vartheta'' = \frac d{d\alpha}\,  \| x(\alpha) \|^2 = 2\langle x,x' \rangle = - 2\langle x, M^{-1}x \rangle,
$$
which is negative (unless $x=0$), since $M$ is positive definite. Finally, 
$$
\left(\frac\vartheta\alpha\right)' = \frac{\alpha \vartheta' - \vartheta}{\alpha^2} 
= \frac{\alpha \| x\|^2  - \vartheta}{\alpha^2}
= - \frac{\|Ax -b \|^2}{\alpha^2},
$$
which is non-positive.
 \qed
\end{proof}

The following matrix estimates are often used in the paper.

\begin{lemma}\label{lem:P-bound} Denote $M_\eps  = A ^\top  A +\eps^2 I$. We have in the matrix 2-norm
\begin{align*}
\|A M_\eps ^{-1}A ^\top \| \le 1,\qquad
\|A M_\eps ^{-1}\| \le \frac{1}{2\eps},\qquad
\|M_\eps ^{-1}\| \le \frac{1}{\eps^2}.
\end{align*}
\end{lemma}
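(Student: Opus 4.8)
The plan is to reduce all three inequalities to a singular value decomposition of $A$. Note first that $M_\eps = A^\top A + \eps^2 I$ is positive definite, hence invertible. Since the domain of $A$ is finite-dimensional, $A$ has finite rank and we may write $A = U\Sigma V^\top$, where $V$ is orthogonal on the parameter space, $U$ is an isometry onto a finite-dimensional subspace, and $\Sigma = \mathrm{diag}(\sigma_1,\dots,\sigma_k)$ collects the singular values $\sigma_i\ge 0$. Then $A^\top A = V\Sigma^2 V^\top$, so $M_\eps = V(\Sigma^2+\eps^2 I)V^\top$ and $M_\eps^{-1} = V(\Sigma^2+\eps^2 I)^{-1}V^\top$.

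From the last identity the eigenvalues of $M_\eps^{-1}$ are $(\sigma_i^2+\eps^2)^{-1}\le \eps^{-2}$, which gives $\|M_\eps^{-1}\|\le \eps^{-2}$ immediately. Substituting the decomposition into $P_\eps := AM_\eps^{-1}A^\top$ yields $P_\eps = U\,\mathrm{diag}\bigl(\sigma_i^2/(\sigma_i^2+\eps^2)\bigr)\,U^\top$, whose diagonal entries lie in $[0,1)$; since $U$ is an isometry this gives $\|AM_\eps^{-1}A^\top\|\le 1$. Likewise $AM_\eps^{-1} = U\,\mathrm{diag}\bigl(\sigma_i/(\sigma_i^2+\eps^2)\bigr)\,V^\top$, so $\|AM_\eps^{-1}\| = \max_i \sigma_i/(\sigma_i^2+\eps^2)$, and $\sigma_i/(\sigma_i^2+\eps^2)\le 1/(2\eps)$ is precisely the AM--GM inequality $\sigma_i^2+\eps^2\ge 2\eps\sigma_i$.

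I do not expect a genuine obstacle: the statement merely packages standard regularized-least-squares estimates, and the only point meriting a remark is that elsewhere in the paper $A$ maps into a possibly infinite-dimensional $\calH$. The SVD argument still applies there because $\mathrm{ran}(A)$ is finite-dimensional, so $A$ is an ordinary matrix between its (finite-dimensional) coimage and range. Should one prefer to avoid the SVD, the same bounds follow from elementary quadratic-form manipulations with the normal equations: with $w = M_\eps^{-1}A^\top v$ one has $\|Aw\|^2 + \eps^2\|w\|^2 = \langle M_\eps w, w\rangle = \langle v, Aw\rangle$, so $\|Aw\|^2\le\langle v,Aw\rangle\le\|v\|\,\|Aw\|$ yields $\|AM_\eps^{-1}A^\top v\| = \|Aw\|\le\|v\|$, while $2\eps\|Aw\|\,\|w\|\le\|Aw\|^2+\eps^2\|w\|^2\le\|v\|\,\|Aw\|$ yields $\|M_\eps^{-1}A^\top v\| = \|w\|\le\|v\|/(2\eps)$; the bound on $\|M_\eps^{-1}\|$ is then immediate from $\langle M_\eps x, x\rangle \ge \eps^2\|x\|^2$. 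The only real ``work'' is to choose the cleanest of these equivalent routes and record the constants.
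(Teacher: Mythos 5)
Your proof is correct and follows essentially the same route as the paper: both diagonalize via the SVD $A=U\Sigma V^\top$ and reduce the three norms to the elementary scalar bounds $\sigma^2/(\sigma^2+\eps^2)\le 1$, $\sigma/(\sigma^2+\eps^2)\le 1/(2\eps)$ (AM--GM), and $1/(\sigma^2+\eps^2)\le 1/\eps^2$. The alternative quadratic-form argument you sketch is a nice SVD-free variant, but it is not needed; incidentally, your expression $AM_\eps^{-1}=U\Sigma(\Sigma^2+\eps^2 I)^{-1}V^\top$ is the correct one (the paper's displayed formula has a typo with $U^\top$ in place of $V^\top$, which does not affect the norm).
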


\begin{proof}
The bounds follow from the singular value decomposition $A =U \Sigma V ^\top $ with $U $ a linear isometry, $V $ unitary and $\Sigma $ diagonal. 
Then,  
\begin{align*}
AM^{-1}A^\top  = U\Sigma\left(\Sigma^2+\eps^2I\right)^{-1}\Sigma U^\top .
\end{align*}
Since $U $ is a linear isometry, we have
\begin{align*}
\|A M_\eps ^{-1}A ^\top \|
&\le \|\Sigma \left(\Sigma ^2+\eps^2I\right)^{-1}\Sigma \|
\le\  \sup_{\sigma\ge 0} \frac{\sigma^2}{\sigma^2+\eps^2} = 1.
\end{align*}
Similarly, $AM^{-1} = U\Sigma(\Sigma^2+\eps^2I)^{-1}U^\top$, so that
\begin{align*}
\|A M_\eps ^{-1}\| 
&= \|\Sigma (\Sigma ^2 +\eps^2)^{-1}\|
\le \sup_{\sigma\ge 0} \frac{\sigma}{\sigma^2+\eps^2} = \frac{1}{2\eps},
\end{align*}
and further
$$
\|M_\eps ^{-1}\| 
= \| (\Sigma ^2 +\eps^2)^{-1}\|
\le \sup_{\sigma\ge 0} \frac{1}{\sigma^2+\eps^2} = \frac{1}{\eps^2},
$$
as stated. \qed
\end{proof}

\end{document}